
\documentclass[letterpaper, 10 pt, conference]{IEEEtran}  

\IEEEoverridecommandlockouts                              


\usepackage{graphics} 
\usepackage{epsfig} 
\usepackage{mathptmx} 
\usepackage{times} 
\usepackage{amsmath} 
\usepackage{amssymb}  
\usepackage{eucal}
\usepackage{graphicx}
\usepackage{cite}
\usepackage{caption}
\usepackage{subcaption}
\usepackage{epstopdf}
\usepackage{latexsym}
\usepackage[german,english]{babel}
\usepackage{todonotes}
\usepackage{soul}
\usepackage{color}

\usepackage{wrapfig}

\graphicspath{{./images/}}


\title{
\LARGE \bf
Modeling and analysis of Duhem hysteresis operators with butterfly loops
}

\author{M. A. Vasquez-Beltran, B. Jayawardhana, R. Peletier
    \thanks{*This paper is based on research developed in the DSSC Doctoral Training Programme, co-funded through a Marie Skłodowska-Curie COFUND (DSSC 754315).} 
    \thanks{ $^{1}$M. A. Vasquez Beltran and B. Jayawardhana are with the Engineering and Technology Institute Groningen, Faculty of Science and Engineering, University of Groningen, 9747AG Groningen, The Netherlands {\tt\small \{m.a.vasquez.beltran;b.jayawardhana\} @rug.nl} }%
    \thanks{ $^{2}$R. Peletier is with the Kapteyn Astronomical Institute, Faculty of Science and Engineering, University of Groningen, 9747AG Groningen, The Netherlands {\tt\small r.peletier@rug.nl} }%
}


\newenvironment{proof}{\vspace{.1cm}\noindent{\sc Proof.   }\hspace{0.05cm}\,\,}{$\hfill\Box$\vspace{.1cm}}
\newtheorem{theorem}            {Theorem}[section]

\newtheorem{lemma}              [theorem]{Lemma} 
\newtheorem{proposition}		[theorem]{Proposition} 
\newtheorem{corollary}		[theorem]{Corollary}

\newcommand{\dd}{{\rm d}\hbox{\hskip 0.5pt}}

\newcommand{\rline}{{\mathbb R}}
\newcommand{\R}{{\mathbb R}}

\newcommand{\N}{{\mathbb N}}

\DeclareMathOperator*{\sign}{sign}




\begin{document}

\bstctlcite{BSTcontrol}
\pagestyle{empty}
\maketitle
\thispagestyle{empty}


\begin{abstract}
    In this work we study and analyze a class of Duhem hysteresis operators that can exhibit butterfly loops. We study firstly the consistency property of such operator which corresponds to the existence of an attractive periodic solution when the operator is subject to a periodic input signal. Subsequently, we study the two defining functions of the Duhem operator such that the corresponding periodic solutions can admit a butterfly input-output phase plot. We present a number of examples where the Duhem butterfly hysteresis operators are constructed using two zero-level set curves that satisfy some mild conditions. 
\end{abstract}


\section{INTRODUCTION}

Hysteresis is a natural phenomenon that was originally investigated in the study of 
magnetic field and magnetic flux density in ferromagnetic materials \cite{Ewing1882}. In the following centuries, the hysteresis phenomena are well-documented and studied in numerous systems originating from various disciplines, 
from biology \cite{Angeli2004,Noori2014}, physics \cite{Brokate1996}, economics \cite{Bakas2020} to experimental psychology \cite{Poltoratski2014}. The hysteresis is typically characterized by the presence of memory in its (dynamic) behaviour and has attracted the attention of scientists for its  intrinsic complexity.  
The multitude of domains, where hysteresis can be found, has led most of the works in literature to describe it using phenomenological models which are rather independent of the specific process underlying it. In this regard, the Duhem model \cite{Ikhouane2018} is one of the well-known  generic models of hysteresis. Its mathematical formulation encompasses many of other phenomenological models, for instance, the Dahl model, the Bouc-Wen model and the Maxwell-slip model \cite{Macki1993}. Another large class of popular models is the Preisach models \cite{Mayergoyz1986,Macki1993} which will not be considered in this paper. 

The Duhem model has been extensively studied in the literature and several mathematical properties have been established. Roughly speaking, the Duhem model maps input signals to output signals via switched nonlinear differential equations, where the switch signal depends on the sign of the derivative of its input signal. 
Mathematical properties of the resulting Duhem operator (with time-independent vector fields) have been presented in literature that include the existence and uniqueness of the solutions, as well as, monotonicity, semigroup and rate-independent properties. 
A thorough exposition of these properties and other fundamental mathematical properties can be found in \cite{Visintin1994,Ikhouane2018,MohammadNaser2013}. 
Control systems properties, where Duhem  operator is feedback interconnected with other nonlinear systems, have been studied in literature.  
For instance, the study of dissipativity in a class of Duhem operators is presented in 
\cite{Jayawardhana2009,Jayawardhana2011,Jayawardhana2012} where the associated storage functions and supply rate functions depend on the specific hysteresis loops obtained from the Duhem models. 
In recent decades, attention has also been given to the convergent systems property \cite{Pavlov2005} or consistency property \cite{Ikhouane2013,Naser2013} of Duhem model where the output converges to a periodic signal when a periodic input signal is given. Such property in the literature of hysteresis is known as the accommodation property as presented for instance in \cite{Bree2009} which investigates the hysteresis modeling in ferromagnetic material. In this case, the phase plot of input and output signals will show loops that converge 
to a limit cycle 
around the so-called anhysteresis curve. This convergent systems property has been shown for  the semi-linear Duhem model \cite{Oh2005} and for the
Babu$\check{\text{s}}$ka's model \cite{Ikhouane2021} which is a class of the Duhem model where each vector field in the 
integro-differential equations can be 
expressed as the multiplication of two single variable functions.

In this paper, we extend the aforementioned results to a class of Duhem model that can exhibit asymmetric butterfly loops. Here, the butterfly loops refer to presence of closed orbits with two or multiple loops in the input-output phase plot. While the standard hysteresis operators produce either counterclockwise or clockwise loops, the butterfly ones comprise of both clockwise and counterclockwise loops. The presence of butterfly loops has been shown and observed in practice for decades, e.g. in piezoactuator systems \cite{Gu2016} and in magnetostrictive materials \cite{dupre2003preisach}. The first simple mathematical modeling, analysis and identification of hysteresis with butterfly loops is presented in \cite{Drinvcic2011} where a convex function is added to the output of standard hysteresis operator in order to enforce two inflection points to the standard loop and thereby creating butterfly loops. A general modeling and analysis of butterfly hysteresis operator based on  Preisach model is presented in \cite{Vasquez2021} and is firstly reported in \cite{Jayawardhana2019} which is used to describe the shape memory property of a newly purposely-designed  piezoelectric materials. This framework has been used in the development of deformable mirror with high-density actuators  \cite{Schmerbauch2020,Huisman2020,Vasquez2021}. As far as the authors are aware of, the modeling and analysis of Duhem model that can exhibit butterfly loops remain largely open and it is the focus of this paper. 


As our first main contribution in the extension of previous results to the butterfly hysteresis operator using Duhem model, 
we investigate the applicability of Babu$\check{\text{s}}$ka's conditions used in \cite{Ikhouane2021} as sufficient conditions for guaranteeing 
the convergence of the input-output phase plot to a closed orbit when the input signal is simple periodic\footnote{A periodic signal is called simple if it admits only one 
maximum and one minimum 
within its periodic interval.} in Section III. These conditions correspond to 
the monotonicity of the vector fields in the Duhem model when the input argument is fixed.  
Using only these  Babu$\check{\text{s}}$ka's conditions, we can relax the 
strong sign-definite assumption on these vector fields that are typically assumed in literature. 
Furthermore, we show that if we have strict monotonicity conditions 
then the closed orbit is unique for any initial value of the output. 
In Section IV, we present our  second main contribution where we study 
a class of Duhem model whose vector fields are sign-indefinite but  satisfy the aforementioned Babu$\check{\text{s}}$ka's conditions. 
Under some additional mild assumptions on the vector fields, we show that the input-output phase plot of this Duhem model converges to a closed orbit with two or more loops, e.g., it exhibits the butterfly loop. At the end of Section IV, we provide illustrative examples of this class of Duhem model. 

\section{PRELIMINARIES}\label{sec:preliminaries}

\textbf{Notation.} We denote by $C(U,Y)$, $AC(U,Y)$, $C_{\text{pw}}(U,Y)$ the spaces of continuous, absolute continuous, and piece-wise continuous functions $f: U\to Y$, respectively. We denote $\mathbb R_+:= [0,\infty)$. \\

We define the next two auxiliary operators which are used throughout this work.
The right-shift operator $\mathcal{S}_\tau\colon AC(\R_+,\R)\to AC(\R_+,\R)$ parameterized by $\tau\in\R$ is defined by
\begin{equation}\label{eq:shift_operator}
    \left[\mathcal{S}_\tau (v)\right](t) := v(t+\tau).
\end{equation}
The right-continuation operator $\mathcal{R}_\tau\colon AC(\R_+,\R)\to AC(\R_+,\R)$ parameterized by $\tau\in\R_+$ is defined by
\begin{equation}\label{eq:continuation_operator}
     \left[\mathcal{R}_\tau (v)\right](t) := \left\{\begin{array}{ll}
        v(t) & \text{if } t\in[0,\tau],\\
        v(\tau)    & \text{if } t\in(\tau,\infty).
    \end{array}\right.
\end{equation}


The Duhem hysteresis operator operator is a mapping $\Phi\colon AC(\R_+,\R)\times\R\to AC(\R_+,\R)$ such that $y=\Phi(u,y_0)$ satisfies
\begin{equation}\label{eq:duhem_model}
    \begin{aligned}
        \dot{y}(t) &= \left\{
            \begin{array}{ll}
                f_1(u(t),y(t))\dot{u},&\text{if }\dot{u}(t)\geq 0,\\
                f_2(u(t),y(t))\dot{u},&\text{if }\dot{u}(t)< 0,
            \end{array}
        \right.\\
        y(0)&=y_0,
    \end{aligned}
\end{equation}
at almost every $t\geq 0$ and with $f_1,f_2\in C^0(\R^2,\R)$.
Given an arbitrary input $u\in AC(\R_+,\R)$ and initial condition $y_0\in\R$, the existence and uniqueness of $y\in AC(\R_+,\R)$ satisfying \eqref{eq:duhem_model} at almost every $t\in[0,T]$ with $T>0$ is studied in \cite{Macki1993,Visintin1994} and guaranteed when $f_1$ and $f_2$ satisfy
\begin{align}
    \left( f_1(\upsilon,\gamma_1) - f_1(\upsilon,\gamma_2) \right)\left( \gamma_1 - \gamma_2 \right) &\leq \hphantom{-}\lambda_1 (u) (\gamma_1 - \gamma_2)^2, \\
    \left( f_2(\upsilon,\gamma_1) - f_2(\upsilon,\gamma_2) \right)\left( \gamma_1 - \gamma_2 \right) &\geq -\lambda_2 (u) (\gamma_1 - \gamma_2)^2,
\end{align}
for every $\upsilon,\gamma_1,\gamma_2\in\R$ and some for non-negative functions $\lambda_1,\lambda_2\in C(\R,\R_+)$.

An important property of the Duhem operator $\Phi$ as defined in \eqref{eq:duhem_model} is that it is rate-independent. In other words, for every $\phi\in C(\R_+,\R_+)$ such that $\phi(0)=0$, increasing and radially unbounded (i.e. $\phi(t)\to\infty$ as $t\to\infty$) we have
\begin{equation*}
    \big[\Phi(u\circ\phi,y_0)\big](t) = \left[\Phi(u,y_0)\circ\phi\right](t).
\end{equation*}

Moreover, following the work of  \cite{Visintin1994}, we consider hysteresis operator that satisfies the semi-group property, which means that if $y=\Phi(u,y_0)$ then
\begin{equation*}\label{eq:duhem_semigroup_pro}
    \mathcal{S}_\tau(\Phi(u,y_0)) = \Phi(\mathcal{S}_\tau(u),\mathcal{S}_\tau(y)).
\end{equation*}

Throughout this work we assume that the implicit function $\upsilon\mapsto\{ \gamma\in\R\ |\ f_1(\upsilon,\gamma)-f_2(\upsilon,\gamma)=0 \}$ admits an explicit solution 
\begin{equation}\label{eq:anhysteresis_function}
    \gamma=\alpha(\upsilon)
\end{equation}
with $\alpha\in C^0(\mathbb R,\mathbb R)$, which we call the \emph{anhysteresis function} and  the corresponding curve (generated by $\alpha$) given by  
\begin{equation}\label{eq:anhysteresis_curve}
    \mathcal{A} = \left\{ (\upsilon,\gamma)\in \R^2\ |\ \gamma=\alpha(\upsilon) \right\},
\end{equation}
is called the \emph{anhysteresis curve}. By definition, the curve $\mathcal A$ divides the input-output plane into two regions where 
$f_1(\upsilon,\gamma_1)-f_2(\upsilon,\gamma_1)\geq0$ whenever $\gamma_1\geq\gamma=\alpha(\upsilon)$, and $f_1(\upsilon,\gamma_1)-f_2(\upsilon,\gamma_1)\leq0$ whenever $\gamma_1\leq\gamma=\alpha(\upsilon)$.

\section{THE DUHEM OPERATOR ACCOMMODATION PROPERTY}\label{sec:accommodation_property}

As briefly described in the Introduction, the accommodation property of the Duhem operator $\Phi$ 
refers to the property where 
the input-output phase plot always converges to 
a periodic closed orbit when the input signal is periodic \cite{VanBree2009}. In this section, we formally study this property and prove that when the input is periodic with a single maximum and a single minimum in its periodic interval, the input-output phase plot approaches a unique periodic closed-loop which revolves in a neighborhood of the anhysteresis curve $\mathcal{A}$. We begin studying the input-output phase plot produced by the application of monotonic inputs and then we extend our analysis to periodic inputs.
For simplicity of notation, in what follows we use $Y_u\colon \R_+\times\R\to\R$, which we define by
\begin{equation*}\label{eq:Yu(t,y0)}
    Y_u(t,y_0) := \left[\Phi(u,y_0)\right](t),
\end{equation*}
to refer to the output of the Duhem operator $\Phi$ parameterized by the input signal $u$ and with the time $t$ and initial condition $y_0$ as independent variables.

\subsection{The Duhem operator with monotonic inputs}
Let $u_+\in AC(\R_+,\R)$ be an input which is monotonically increasing in $[0,\infty)$ and consider a sub-interval $[0,\tau_1]$ with $\tau_1>0$ such that $u(0)=\upsilon_{\min}<\upsilon_{\max}=u(\tau_1)$. Since the Duhem operator $\Phi$ defined with \eqref{eq:duhem_model} is rate-independent as shown in \cite{Visintin1994,Ikhouane2018,Oh2005}, for every $t\in[0,\tau_1]$ we have that 
\begin{equation}\label{eq:integral_Yu+}
    \begin{aligned}
        Y_{u_+}(t,y_0) - y_0 &= \int_{0}^{t} f_1 \left( u_+(\tau),\ Y_{u_+}(\tau,y_0) \right)\ \dot{u}(\tau)\ \dd\tau  \\
        &= \int_{\upsilon_{\min}}^{u_+(t)} f_1\left(\upsilon, \mathcal{Y}_{u_+}(\upsilon,y_0) \right)\ \dd\upsilon\\
        &= \mathcal{Y}_{u_+}(u_+(t),y_0) - \mathcal{Y}_{u_+}(\upsilon_{\min},y_0)
    \end{aligned}
\end{equation}
where 
\begin{equation*}
    \mathcal{Y}_{u_+}\colon[\upsilon_{\min}, \upsilon_{\max}]\times\R\to\R
\end{equation*}
is the parameterization of the corresponding solution $Y_{u_+}(t,y_0)$ with the instantaneous value of the input $u_+$ and the initial condition $y_0$ as independent variables (i.e. $\mathcal{Y}_{u_+}(\upsilon(t),y_0)=Y_{u_+}(t,y_0)$ with $\upsilon(t) = u_+(t)$ for every $t\in[0,\tau_1]$).

Analogously, let $u_-\in AC(\R_+,\R)$ be an input which is monotonically decreasing in $[0,\infty)$ and consider a sub-interval $[0,\tau_2]$ with $\tau_2>0$ such that $u(0)=\upsilon_{\max}>\upsilon_{\min}=u(\tau_2)$. By the rate-independent property of the Duhem operator, we have that, for every $t\in[0,\tau_2]$,  
\begin{equation}\label{eq:integral_Yu-}
    \begin{aligned}
        Y_{u_-}(t,y_0) - y_0 &= \int_{0}^{t} f_2 \left( u_-(\tau),\ Y_{u_-}(\tau,y_0) \right)\ \dot{u}(\tau)\ \dd\tau  \\
        &= \int_{\upsilon_{\max}}^{u_-(t)} f_2\left(\upsilon, \mathcal{Y}_{u_-}(\upsilon,y_0) \right)\ \dd\upsilon\\
        &= \mathcal{Y}_{u_-}(u_-(t),y_0) - \mathcal{Y}_{u_-}(\upsilon_{\max},y_0)
    \end{aligned}
\end{equation}
where in this case
\begin{equation*}
    \mathcal{Y}_{u_-}\colon[\upsilon_{\min}, \upsilon_{\max}]\times\R\to\R
\end{equation*}
is the parameterization of the corresponding solution $Y_{u_-}(t,y_0)$ with the instantaneous value of the input $u_-$ and the initial condition $y_0$ as independent variables (i.e. $\mathcal{Y}_{u_-}(\upsilon,y_0)=Y_{u_-}(t,y_0)$ with $\upsilon = u_-(t)$ for every $t\in[0,\tau_2]$).

In what follows, we present a series of auxiliary lemmas necessary to prove the accommodation property. Firstly, using the parameterizations $\mathcal{Y}_{u_+}$ and $\mathcal{Y}_{u_-}$ of the output, we state formally in the first lemma that two input-output phase plots obtained with the same monotonic input but from different initial conditions never cross each other.

\vspace*{2mm}
\begin{lemma}\label{lemma:non_intersecting_calYu}
    The next statements are true.
    \begin{itemize}
        \item[\emph{a)}] If two initial conditions satisfy $\gamma_a\leq \gamma_b$, then we have
        \begin{equation*}
            \begin{aligned}
                \mathcal{Y}_{u_+}(\upsilon,\gamma_a)&\leq\mathcal{Y}_{u_+}(\upsilon,\gamma_b), \\
                \mathcal{Y}_{u_-}(\upsilon,\gamma_a)&\leq\mathcal{Y}_{u_-}(\upsilon,\gamma_b),
            \end{aligned}
        \end{equation*}
        for every $\upsilon\in[\upsilon_{\min},\upsilon_{\max}]$.\vspace*{1mm}

        \item[\emph{b)}] If we have that
        \begin{equation*}
            \begin{array}{r@{}l}
                                   & \mathcal{Y}_{u_+}(\upsilon_{\max},\gamma_a) < \mathcal{Y}_{u_+}(\upsilon_{\max},\gamma_b) \\
               \big(\text{resp.  } & \mathcal{Y}_{u_-}(\upsilon_{\min},\gamma_a) < \mathcal{Y}_{u_-}(\upsilon_{\min},\gamma_b) \big),
            \end{array}
        \end{equation*}
        then
        \begin{equation*}
            \begin{array}{r@{}l}
                                   & \mathcal{Y}_{u_+}(\upsilon,\gamma_a) < \mathcal{Y}_{u_+}(\upsilon,\gamma_b) \\
               \big(\text{resp.  } & \mathcal{Y}_{u_-}(\upsilon,\gamma_a) < \mathcal{Y}_{u_-}(\upsilon,\gamma_b) \big),
            \end{array}
        \end{equation*}
        for every $\upsilon\in[\upsilon_{\min},\upsilon_{\max}]$.
        \end{itemize}
\end{lemma}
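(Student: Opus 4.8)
The plan is to prove both parts via comparison-type arguments for the scalar ODEs governing $\mathcal{Y}_{u_+}$ and $\mathcal{Y}_{u_-}$, exploiting the one-sided Lipschitz bounds on $f_1,f_2$ stated in the preliminaries. First I would observe that, after reparameterizing by the input value $\upsilon$, the functions $\upsilon\mapsto\mathcal{Y}_{u_+}(\upsilon,\gamma)$ and $\upsilon\mapsto\mathcal{Y}_{u_-}(\upsilon,\gamma)$ satisfy, for almost every $\upsilon\in[\upsilon_{\min},\upsilon_{\max}]$,
\begin{equation*}
    \frac{\dd}{\dd\upsilon}\mathcal{Y}_{u_+}(\upsilon,\gamma) = f_1\bigl(\upsilon,\mathcal{Y}_{u_+}(\upsilon,\gamma)\bigr),\qquad
    \frac{\dd}{\dd\upsilon}\mathcal{Y}_{u_-}(\upsilon,\gamma) = f_2\bigl(\upsilon,\mathcal{Y}_{u_-}(\upsilon,\gamma)\bigr),
\end{equation*}
with initial data $\mathcal{Y}_{u_+}(\upsilon_{\min},\gamma)=\gamma$ and $\mathcal{Y}_{u_-}(\upsilon_{\max},\gamma)=\gamma$; these follow by differentiating \eqref{eq:integral_Yu+} and \eqref{eq:integral_Yu-} and using rate-independence to reduce to a canonical monotone input. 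This recasts the problem as a standard ordinary differential inequality question in the single variable $\upsilon$.

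For part a), I would set $\delta(\upsilon):=\mathcal{Y}_{u_+}(\upsilon,\gamma_b)-\mathcal{Y}_{u_+}(\upsilon,\gamma_a)$, which satisfies $\delta(\upsilon_{\min})=\gamma_b-\gamma_a\geq0$ and $\dot\delta(\upsilon)=f_1(\upsilon,\mathcal{Y}_{u_+}(\upsilon,\gamma_b))-f_1(\upsilon,\mathcal{Y}_{u_+}(\upsilon,\gamma_a))$. The one-sided Lipschitz condition on $f_1$ gives $\dot\delta(\upsilon)\,\delta(\upsilon)\leq\lambda_1\,\delta(\upsilon)^2$ wherever the parameterization is differentiable, hence $\tfrac{\dd}{\dd\upsilon}\bigl(\tfrac12\delta(\upsilon)^2\bigr)\leq\lambda_1\delta(\upsilon)^2$, and Gr\"onwall's inequality forces $\delta(\upsilon)^2\leq e^{2\int\lambda_1}\delta(\upsilon_{\min})^2$ — which does not immediately give the sign claim, so instead I would argue directly: if $\delta$ were negative somewhere, let $\upsilon^\star$ be the last point before it where $\delta(\upsilon^\star)=0$; then on a right-neighborhood $\delta<0$ and $\delta^2>0$ satisfies $\tfrac{\dd}{\dd\upsilon}\delta^2\leq\lambda_1\delta^2$ with $\delta^2(\upsilon^\star)=0$, so Gr\"onwall gives $\delta^2\equiv0$ there, a contradiction. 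Thus $\delta\geq0$ on all of $[\upsilon_{\min},\upsilon_{\max}]$. The argument for $\mathcal{Y}_{u_-}$ is identical except that one integrates in the direction of decreasing $\upsilon$ from $\upsilon_{\max}$, so the relevant inequality is set up starting at $\upsilon_{\max}$ and the sign of the Lipschitz bound $-\lambda_2$ on $f_2$ combines with the reversed integration direction to give the same monotone comparison.

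For part b), I would again work with $\delta$ and suppose toward a contradiction that $\delta(\upsilon_0)=0$ for some $\upsilon_0\in[\upsilon_{\min},\upsilon_{\max}]$, while the hypothesis gives $\delta(\upsilon_{\max})>0$ (resp. the analogous statement for $\mathcal{Y}_{u_-}$ at $\upsilon_{\min}$). By part a) we already know $\delta\geq0$ throughout. Running the Gr\"onwall estimate forward from $\upsilon_0$ — where $\delta^2$ vanishes — yields $\delta^2(\upsilon)\leq e^{2\int_{\upsilon_0}^{\upsilon}\lambda_1}\cdot 0=0$ for all $\upsilon\geq\upsilon_0$, in particular $\delta(\upsilon_{\max})=0$, contradicting strict positivity there. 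Hence $\delta(\upsilon)>0$ for every $\upsilon$. I expect the main obstacle to be purely technical: justifying that the reparameterized solutions $\mathcal{Y}_{u_\pm}(\cdot,\gamma)$ are absolutely continuous in $\upsilon$ and that the integral identities \eqref{eq:integral_Yu+}--\eqref{eq:integral_Yu-} can legitimately be differentiated to the ODE form above — i.e., handling intervals on which the monotone input is locally constant, where the change of variables degenerates — but rate-independence lets one reduce to a strictly monotone (indeed affine) input where these issues disappear, so this is a matter of careful bookkeeping rather than a genuine difficulty.
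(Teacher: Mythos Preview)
Your argument is correct but follows a different route from the paper. The paper's proof is purely at the operator level: it invokes the semi-group property and the uniqueness of solutions of the Duhem model (both stated in the preliminaries) as black boxes. Concretely, for part~a) the paper supposes the ordering fails at some $\upsilon_c$, uses continuity to locate an intersection point $\upsilon_x$ where $\mathcal{Y}_{u_+}(\upsilon_x,\gamma_a)=\mathcal{Y}_{u_+}(\upsilon_x,\gamma_b)$, and then observes that the shifted input $u_s=\mathcal{S}_{\tau_x}(u_+)$ together with the semi-group property forces the two outputs to coincide for all later times by uniqueness of $Y_{u_s}$, contradicting the strict inequality at $\upsilon_c$; part~b) is handled the same way. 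Your approach instead drops down to the reparameterized ODEs in $\upsilon$ and proves the needed uniqueness-from-a-touching-point directly via Gr\"onwall on $\delta^2$, using the one-sided Lipschitz bounds. The logical skeleton is the same---both hinge on ``if two trajectories meet, they merge''---but the paper packages this as an appeal to established operator properties, whereas you re-derive it analytically. Your version is more self-contained and makes the role of the Lipschitz constants explicit; the paper's version is shorter and emphasizes that the result is really a consequence of the semi-group and uniqueness structure rather than of any quantitative estimate.
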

\vspace*{1mm}
\begin{proof}
    We prove both statements for $\mathcal{Y}_{u_+}$ by contradiction as follows.\vspace*{1mm}
    
        \begin{figure}
        \centering
        \begin{subfigure}{0.50\linewidth}
            \centering
            \includegraphics[width=1.0\linewidth]{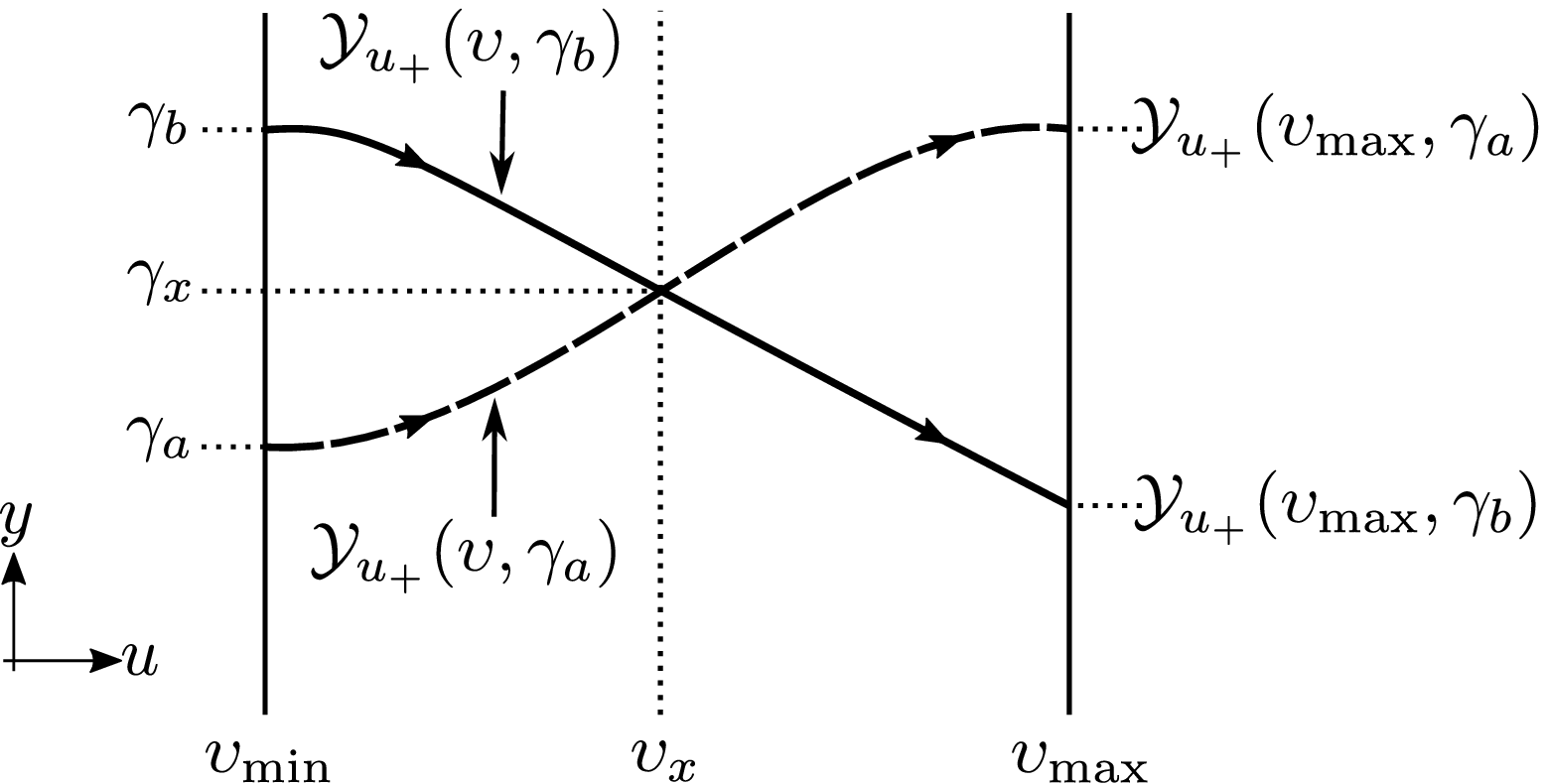}
            \caption[]{Contradiction of case \emph{a)} in Lemma \ref{lemma:non_intersecting_calYu}. \label{fig:intersecting_y+_a}}
        \end{subfigure}%
        \hspace*{2mm}%
        \begin{subfigure}{0.50\linewidth}
            \centering
            \includegraphics[width=1.0\linewidth]{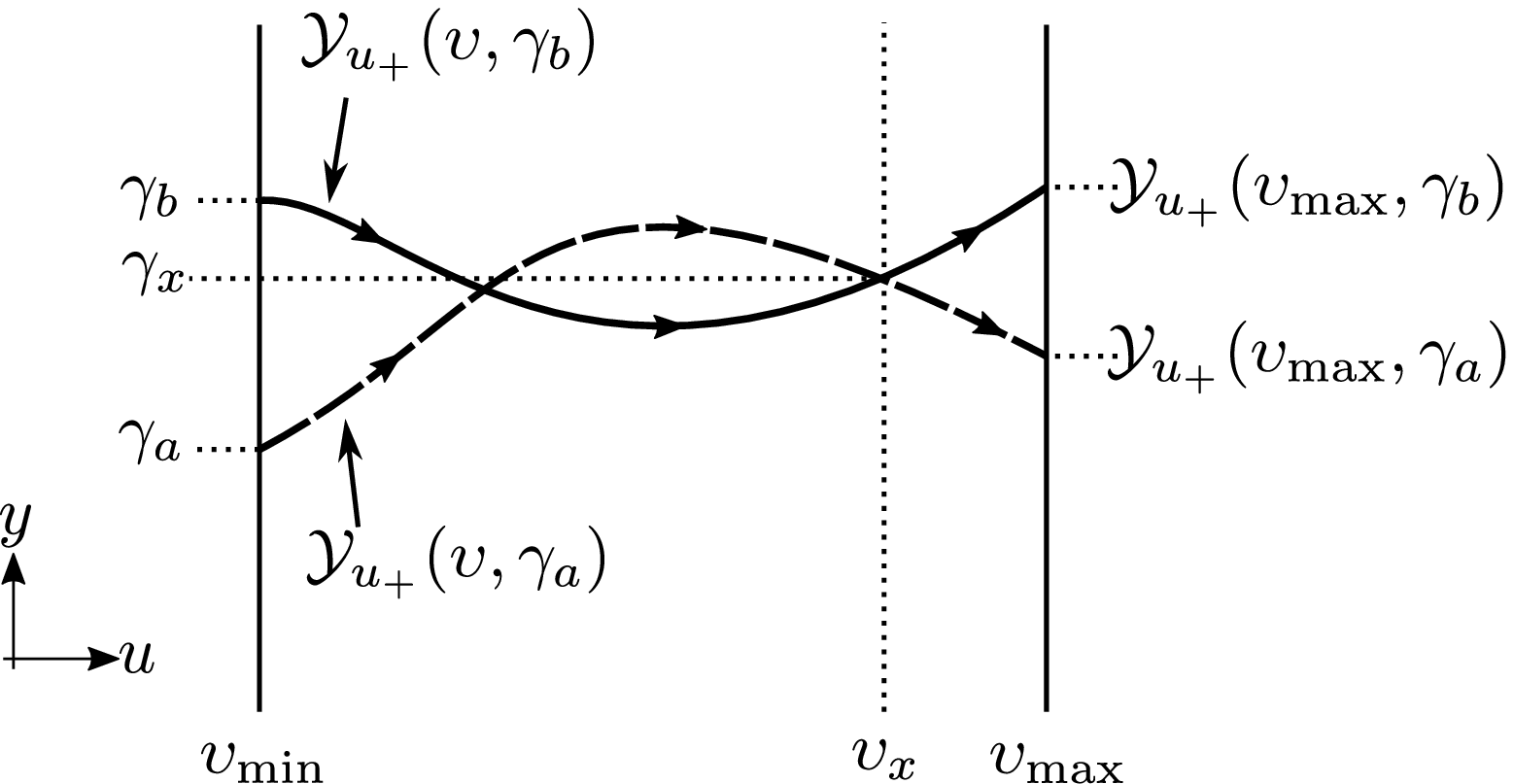}
            \caption[]{Contradiction of case \emph{b)} in Lemma \ref{lemma:non_intersecting_calYu}. \label{fig:intersecting_y+_b}}
        \end{subfigure}
        \caption{Illustration of non-possible intersection between solutions in the input-output phase plot as shown in Lemma \ref{lemma:non_intersecting_calYu} corresponding to $\mathcal{Y}_{u_+}$. The one for $\mathcal{Y}_{u_-}$ follows in a similar fashion.}
    \end{figure}
    
    \noindent \emph{Part a) } Let $\gamma_a\leq \gamma_b$ and assume that 
    $\mathcal{Y}_{u_+}(\upsilon_c,\gamma_a)>\mathcal{Y}_{u_+}(\upsilon_c,\gamma_b)$ for some $\upsilon_c\in[\upsilon_{\min},\upsilon_{\max}]$.\vspace*{1mm}
    
    \noindent Let $\tau_c\in [0,\tau_1)$ be the corresponding time such that $\mathcal{Y}_{u_+}(\upsilon_c,\gamma_a)=Y_{u_+}(\tau_c,\gamma_a)>Y_{u_+}(\tau_c,\gamma_b)=\mathcal{Y}_{u_+}(\upsilon_c,\gamma_b)$.\vspace*{1mm}
    
    \noindent By continuity of $\mathcal{Y}_{u_+}$, there exists $\upsilon_x\in[\upsilon_{\min},\upsilon_c)$ such that 
    $\gamma_x=\mathcal{Y}_{u_+}(\upsilon_x,\gamma_a)=\mathcal{Y}_{u_+}(\upsilon_x,\gamma_b)$ (see Fig. \ref{fig:intersecting_y+_a}).
    Let $\tau_x\in [0,\tau_c)$ be the corresponding time instance such that 
    $\gamma_x=Y_{u_+}(\tau_x,\gamma_a)=Y_{u_+}(\tau_x,\gamma_b)$. \vspace*{1mm}
    
    \noindent We can create a right-shifted input $u_s=\mathcal{S}_{\tau_x}(u_+)$ 
    and note that by the semi-group property of the Duhem operator we must have that
    \begin{equation*}
        Y_{u_+}(t+\tau_x,\gamma_a) = Y_{u_s}(t,\gamma_x) = Y_{u_+}(t+\tau_x,\gamma_b)
    \end{equation*}
    for every $t\in[0,\tau_1-\tau_x]$, which implies a contradiction to the uniqueness of solution $Y_{u_s}$ since
    \begin{equation*}
        Y_{u_+}(\tau_c,\gamma_a)>Y_{u_+}(\tau_c,\gamma_b).
    \end{equation*}
    Therefore, $\mathcal{Y}_{u_+}(\upsilon,\gamma_a)\leq\mathcal{Y}_{u_+}(\upsilon,\gamma_b)$ for every $\upsilon\in[\upsilon_{\min},\upsilon_{\max}]$.\vspace*{1mm}
    
    \noindent \emph{Part b) } Let $\mathcal{Y}_{u_+}(\upsilon_{\max},\gamma_a)<\mathcal{Y}_{u_+}(\upsilon_{\max},\gamma_b)$ and assume that 
    $\gamma_x=\mathcal{Y}_{u_+}(\upsilon_x,\gamma_a)=\mathcal{Y}_{u_+}(\upsilon_x,\gamma_b)$ for some $\upsilon_x\in[\upsilon_{\min},\upsilon_{\max})$ (see Fig. \ref{fig:intersecting_y+_b}).
    Letting $\tau_x\in [0,\tau_1)$ be the corresponding time instance such that 
    $\gamma_x=Y_{u_+}(\tau_x,\gamma_a)=Y_{u_+}(\tau_x,\gamma_b)$ and creating right-shifted input $u_s=\mathcal{S}_{\tau_x}(u_+)$ we can obtain the same contradiction as in \emph{Part a)}. Therefore, $\mathcal{Y}_{u_+}(\upsilon,\gamma_a)<\mathcal{Y}_{u_+}(\upsilon,\gamma_b)$ for every $\upsilon\in[\upsilon_{\min},\upsilon_{\max}]$\\

    We prove the statements for $\mathcal{Y}_{u_-}$ also by contradiction. \vspace*{1mm}
    
    \noindent \emph{Part a) } Let $\gamma_a\leq \gamma_b$ and assume that 
    $\mathcal{Y}_{u_-}(\upsilon_c,\gamma_a)>\mathcal{Y}_{u_-}(\upsilon_c,\gamma_b)$ for some $\upsilon_c\in[\upsilon_{\min},\upsilon_{\max}]$.\vspace*{1mm}
    
    \noindent Let $\tau_c\in [0,\tau_2)$ be the corresponding time such that $\mathcal{Y}_{u_-}(\upsilon_c,\gamma_a)=Y_{u_-}(\tau_c,\gamma_a)>Y_{u_-}(\tau_c,\gamma_b)=\mathcal{Y}_{u_-}(\upsilon_c,\gamma_b)$.\vspace*{1mm}
    
    \noindent By continuity of $\mathcal{Y}_{u_-}$, there exists $\upsilon_x\in(\upsilon_c,\upsilon_{\max})$ such that 
    $\gamma_x=\mathcal{Y}_{u_-}(\upsilon_x,\gamma_a)=\mathcal{Y}_{u_-}(\upsilon_x,\gamma_b)$.
    Let $\tau_x\in [0,\tau_c)$ be the corresponding time instance such that 
    $\gamma_x=Y_{u_-}(\tau_x,\gamma_a)=Y_{u_-}(\tau_x,\gamma_b)$.\vspace*{1mm}
    
    \noindent We can create again right-shifted input $u_s=\mathcal{S}_{\tau_x}(u_-)$ 
    and note that by 
    the semi-group property 
    of the Duhem operator we must have that
    \begin{equation*}
        Y_{u_-}(t+\tau_x,\gamma_a) = Y_{u_s}(t,\gamma_x) = Y_{u_-}(t+\tau_x,\gamma_b)
    \end{equation*}
    for every $t\in[0,\tau_2-\tau_x]$, which implies a contradiction to the uniqueness of solution $Y_{u_s}$ since
    \begin{equation*}
        Y_{u_-}(\tau_c,\gamma_a)>Y_{u_-}(\tau_c,\gamma_b).
    \end{equation*}
    Therefore, $\mathcal{Y}_{u_-}(\upsilon,\gamma_a)\leq\mathcal{Y}_{u_-}(\upsilon,\gamma_b)$ for every $\upsilon\in[\upsilon_{\min},\upsilon_{\max}]$. \vspace*{1mm}
    
    \noindent \emph{Part b) } Let $\mathcal{Y}_{u_-}(\upsilon_{\min},\gamma_a)<\mathcal{Y}_{u_-}(\upsilon_{\min},\gamma_b)$ and assume that 
    $\gamma_x=\mathcal{Y}_{u_-}(\upsilon_x,\gamma_a)=\mathcal{Y}_{u_-}(\upsilon_x,\gamma_b)$ for some $\upsilon_x\in(\upsilon_{\min},\upsilon_{\max}]$. 
    Letting $\tau_x\in [0,\tau_2)$ be the corresponding time instance such that 
    $\gamma_x=Y_{u_-}(\tau_x,\gamma_a)=Y_{u_-}(\tau_x,\gamma_b)$ and creating right-shifted input $u_s=\mathcal{S}_{\tau_x}(u_-)$ we can obtain the same contradiction as in \emph{Part a)} for $\mathcal{Y}_{u_-}$.  Therefore, $\mathcal{Y}_{u_-}(\upsilon,\gamma_a)<\mathcal{Y}_{u_-}(\upsilon,\gamma_b)$ for every $\upsilon\in[\upsilon_{\min},\upsilon_{\max}]$. 
\end{proof}

\vspace*{1mm}

\subsection{The Duhem operator with a periodic input}
We can now analyze the behavior of the Duhem operator $\Phi$ when the applied input signal is simple periodic. For this, let $u_p\in AC(\R_+,\R)$ be a periodic input with period $T>0$ and with one minimum and one maximum $\upsilon_{\min}<\upsilon_{\max}$ in its periodic interval. Without loss of generality, we assume that  $u_p(0)=\upsilon_{\min}$ and $u_p(t_1)=\upsilon_{\max}$ for some $t_1\in(0,T)$. In other words, $0<t_1<T$ is a monotonic partition of $[0,T]$.
We can split $u_p$ into its two monotonic intervals using the right-shift and right-continuation operators \eqref{eq:shift_operator} and \eqref{eq:continuation_operator}, which are formalized using two functions $u_{p+},u_{p-}\in AC(\R_+,\R)$ given by
\begin{equation*}
    \begin{aligned}
        u_{p+} &= \mathcal{R}_{t_1}(u_p),\\
        u_{p-} &= \mathcal{S}_{t_1}(\mathcal{R}_T(u_p)),
    \end{aligned}
\end{equation*}
whose corresponding outputs when applied to the Duhem operator are given by $Y_{u_{p+}}(t,\gamma)$ and $Y_{u_{p-}}(t,\zeta)$ for some initial conditions $\gamma,\zeta\in\R$.

Following the same argumentation as  before to obtain \eqref{eq:integral_Yu+} and \eqref{eq:integral_Yu-}, we can parameterize $Y_{u_{p+}}(t,\gamma)$ and $Y_{u_{p-}}(t,\zeta)$ by two mappings
\begin{align*}
    \mathcal{Y}_{u_{p+}}&\colon [\upsilon_{\min},\upsilon_{\max}]\times\R\to\R,\\ 
    \mathcal{Y}_{u_{p-}}&\colon [\upsilon_{\min},\upsilon_{\max}]\times\R\to\R, 
\end{align*}
respectively, where the instantaneous values of the inputs $u_{p+}$ and $u_{p-}$, and initial conditions $\gamma$ and $\zeta$ are the independent variables.

For arbitrary $\gamma_0\in\R$, let us define two sequences $(\zeta_n)_{n\in\N_0}$ and $(\gamma_n)_{n\in\N_0}$ recursively by
\begin{align}
    \zeta_n      &:= \mathcal{Y}_{u_{p+}}(\upsilon_{\max},\gamma_n),\label{eq:zeta_n}\\
    \gamma_{n+1} &:= \mathcal{Y}_{u_{p-}}(\upsilon_{\min},\zeta_n)  \label{eq:gamma_n+1}.
\end{align}
Note then that making $\gamma_0=y_0$, the output $Y_{u_p}(t,y_0)$ can be constructed recursively by
\begin{equation*}
    Y_{u_p}(t,y_0) = \left\{\begin{array}{l r@{} r@{} c@{} l}
        \mathcal{Y}_{u_{p+}}(u_p(t),\gamma_n), & \text{if } &     nT&\leq t<&t_1+nT, \\
        \mathcal{Y}_{u_{p-}}(u_p(t),\zeta_n),  & \text{if } & t_1+nT&\leq t<&(n+1)T,
    \end{array}\right.
\end{equation*}
with $n\in\N_0$. Therefore, we study the convergence of the solution $Y_{u_p}$ to a periodic solution using this sequences.

The next three lemmas present properties of the sequences $(\zeta_n)_{n\in\N_0}$ and $(\gamma_n)_{n\in\N_0}$ generated by the recursive composition of the function $\mathcal{Y}_{u_{p+}}$ and $\mathcal{Y}_{u_{p-}}$ that will be used in the proof of the main result of this section.

\vspace*{2mm}
\begin{lemma}\label{lemma:sequences_monotonic}
    Let $\gamma_0\in\R$. The sequences $(\zeta_n)_{n\in\N_0}$ and $(\gamma_n)_{n\in\N_0}$ generated by \eqref{eq:zeta_n} and \eqref{eq:gamma_n+1} are monotonic in the same direction (i.e. both increasing or both decreasing).\\
\end{lemma}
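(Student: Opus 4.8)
The plan is to establish the monotonicity of the two sequences by first showing they are "coupled": the direction of the step from $\gamma_n$ to $\gamma_{n+1}$ is the same as the direction of the step from $\zeta_{n-1}$ to $\zeta_n$, and vice versa, using the order-preservation property of the parameterized flow maps $\mathcal{Y}_{u_{p+}}$ and $\mathcal{Y}_{u_{p-}}$ proved in Lemma~\ref{lemma:non_intersecting_calYu}(a). Concretely, $\zeta_n = \mathcal{Y}_{u_{p+}}(\upsilon_{\max},\gamma_n)$ is a nondecreasing function of $\gamma_n$, and $\gamma_{n+1} = \mathcal{Y}_{u_{p-}}(\upsilon_{\min},\zeta_n)$ is a nondecreasing function of $\zeta_n$; hence the composition $\gamma_n \mapsto \gamma_{n+1}$ is monotone nondecreasing, and likewise $\zeta_{n-1}\mapsto\zeta_n$ is monotone nondecreasing.

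First I would compare $\gamma_0$ and $\gamma_1$. There are three cases. If $\gamma_1 = \gamma_0$, then by uniqueness of the solution $Y_{u_p}$ the whole trajectory is already periodic, so both sequences are constant, hence (trivially) monotonic in the same direction. If $\gamma_1 > \gamma_0$, I would feed this into the monotone map $\gamma_n\mapsto\gamma_{n+1}$ to get $\gamma_2 \ge \gamma_1$, and then inductively $\gamma_{n+1}\ge\gamma_n$ for all $n$, so $(\gamma_n)$ is nondecreasing; the symmetric argument with $\gamma_1 < \gamma_0$ gives $(\gamma_n)$ nonincreasing. The case $\gamma_1 < \gamma_0$ is handled identically with all inequalities reversed.

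Next I would transfer this to $(\zeta_n)$. Since $\zeta_n = \mathcal{Y}_{u_{p+}}(\upsilon_{\max},\gamma_n)$ and the map $\gamma\mapsto\mathcal{Y}_{u_{p+}}(\upsilon_{\max},\gamma)$ is nondecreasing by Lemma~\ref{lemma:non_intersecting_calYu}(a), the ordering $\gamma_n \le \gamma_{n+1}$ (resp.\ $\ge$) immediately yields $\zeta_n \le \zeta_{n+1}$ (resp.\ $\ge$). Thus $(\zeta_n)$ is monotonic in the same direction as $(\gamma_n)$, which is exactly the claim.

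I do not expect a serious obstacle here; the only point requiring care is the degenerate case $\gamma_1 = \gamma_0$, where I must invoke uniqueness of solutions together with the semigroup property to conclude that the entire output is $T$-periodic (so that $\zeta_n$ is also constant and the sequences do not, e.g., oscillate). The rest is a clean induction built on the monotone-dependence Lemma~\ref{lemma:non_intersecting_calYu}(a), and no quantitative estimates are needed.
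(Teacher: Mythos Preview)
Your proposal is correct and follows essentially the same route as the paper: both arguments use the order-preservation from Lemma~\ref{lemma:non_intersecting_calYu}(a) to see that the update maps $\gamma\mapsto\mathcal{Y}_{u_{p+}}(\upsilon_{\max},\gamma)$ and $\zeta\mapsto\mathcal{Y}_{u_{p-}}(\upsilon_{\min},\zeta)$ are nondecreasing, and then propagate the initial inequality $\gamma_0\lessgtr\gamma_1$ by induction. One minor simplification: the degenerate case $\gamma_1=\gamma_0$ does not actually require invoking the semigroup property or periodicity of the full output---it is already covered by the same monotone-map iteration (if $\gamma_1=\gamma_0$ then $\gamma_2=F(\gamma_1)=F(\gamma_0)=\gamma_1$, etc.), so no separate treatment is needed.
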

\vspace*{1mm}
\begin{proof}
    By induction, let $\gamma_i\geq\gamma_{i+1}$ and note that by Lemma \ref{lemma:non_intersecting_calYu}, we have
    \begin{equation*}
        \begin{aligned}            
            \zeta_i=\mathcal{Y}_{u_{p+}}(\upsilon_{\max},\gamma_i)&\geq\mathcal{Y}_{u_{p+}}(\upsilon_{\max},\gamma_{i+1})=\zeta_{i+1}\\
            \gamma_i=\mathcal{Y}_{u_{p-}}(\upsilon_{\min},\zeta_i), &\geq\mathcal{Y}_{u_{p-}}(\upsilon_{\min},\zeta_{i+1})=\gamma_{i+2},
        \end{aligned}
    \end{equation*}
    which proves that both sequences increasing. 
    Reversing all previous inequalities proves that both sequences are decreasing.
\end{proof}
\vspace*{1mm}

\vspace*{2mm}
\begin{lemma}\label{lemma:sequences_constant}
    Let $\gamma_0\in\R$ and consider the sequences $(\zeta_n)_{n\in\N_0}$ and $(\gamma_n)_{n\in\N_0}$ generated by \eqref{eq:zeta_n} and \eqref{eq:gamma_n+1}.
    Then the next statements are true.
    \begin{itemize}
        \item[a)] If $\gamma_i=\gamma_{i+1}$ for some $i\in\N_0$ then for every $k\geq i$ we have
        \begin{equation*}
            \begin{aligned}
                \zeta_{k}=\zeta_{k+1} \qquad\text{and}\qquad \gamma_{k+1}=\gamma_{k+2}.
            \end{aligned}
        \end{equation*}
        \item[b)] If $\zeta_j=\zeta_{j+1}$ for some $j\in\N_0$ then for every $k\geq j$ we have
        \begin{equation*}
            \begin{aligned}
                \gamma_{k+1}=\gamma_{k+2} \qquad\text{and}\qquad \zeta_{k+1}=\zeta_{k+2}.
            \end{aligned}
        \end{equation*}    
    \end{itemize}
\end{lemma}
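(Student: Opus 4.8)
The plan is to exploit the single structural feature of the recursions \eqref{eq:zeta_n}--\eqref{eq:gamma_n+1}: the maps $\gamma\mapsto\mathcal{Y}_{u_{p+}}(\upsilon_{\max},\gamma)$ and $\zeta\mapsto\mathcal{Y}_{u_{p-}}(\upsilon_{\min},\zeta)$ are genuine single-valued functions, which is a consequence of the uniqueness of solutions of \eqref{eq:duhem_model} (the same fact that was used when the parameterizations $\mathcal{Y}_{u_{p+}}$ and $\mathcal{Y}_{u_{p-}}$ were introduced). Consequently equal arguments are sent to equal values, and the equality of two consecutive terms propagates along the recursion. Neither Lemma \ref{lemma:non_intersecting_calYu} nor Lemma \ref{lemma:sequences_monotonic} is needed; a bare induction suffices.

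For part a), I would first isolate the one-step implication. If $\gamma_i=\gamma_{i+1}$, then $\zeta_i=\mathcal{Y}_{u_{p+}}(\upsilon_{\max},\gamma_i)=\mathcal{Y}_{u_{p+}}(\upsilon_{\max},\gamma_{i+1})=\zeta_{i+1}$, and in turn $\gamma_{i+1}=\mathcal{Y}_{u_{p-}}(\upsilon_{\min},\zeta_i)=\mathcal{Y}_{u_{p-}}(\upsilon_{\min},\zeta_{i+1})=\gamma_{i+2}$. Thus $\gamma_i=\gamma_{i+1}$ yields both $\zeta_i=\zeta_{i+1}$ and $\gamma_{i+1}=\gamma_{i+2}$. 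Iterating this implication, i.e. an induction on $k\ge i$ with hypothesis $\gamma_k=\gamma_{k+1}$, gives $\gamma_k=\gamma_{k+1}$ for every $k\ge i$, and feeding each such index into the first half of the one-step implication produces $\zeta_k=\zeta_{k+1}$ for every $k\ge i$, which is the assertion.

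For part b), I would reduce to part a). If $\zeta_j=\zeta_{j+1}$, then $\gamma_{j+1}=\mathcal{Y}_{u_{p-}}(\upsilon_{\min},\zeta_j)=\mathcal{Y}_{u_{p-}}(\upsilon_{\min},\zeta_{j+1})=\gamma_{j+2}$, so the hypothesis of part a) holds with index $j+1$ in place of $i$. Part a) then delivers $\gamma_{k+1}=\gamma_{k+2}$ and $\zeta_{k+1}=\zeta_{k+2}$ for all $k\ge j+1$, while the two remaining boundary equalities for $k=j$, namely $\gamma_{j+1}=\gamma_{j+2}$ and $\zeta_{j+1}=\zeta_{j+2}$, are precisely what was just derived (the second one following from $\gamma_{j+1}=\gamma_{j+2}$ through the one-step implication of part a)). Hence the conclusion holds for every $k\ge j$.

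There is essentially no analytic difficulty here; the only points demanding care are the index bookkeeping in part b), making sure the $k=j$ case is included and not merely $k\ge j+1$, and the implicit but essential observation that the parameterizations $\mathcal{Y}_{u_{p+}}$ and $\mathcal{Y}_{u_{p-}}$ are well defined as functions, so that the ``same argument, same value'' step is legitimate.
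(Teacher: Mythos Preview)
Your proposal is correct and follows essentially the same approach as the paper: both arguments rest on the uniqueness of solutions of \eqref{eq:duhem_model}, which makes $\gamma\mapsto\mathcal{Y}_{u_{p+}}(\upsilon_{\max},\gamma)$ and $\zeta\mapsto\mathcal{Y}_{u_{p-}}(\upsilon_{\min},\zeta)$ single-valued, so that the two one-step implications $\gamma_i=\gamma_{i+1}\Rightarrow\zeta_i=\zeta_{i+1}$ and $\zeta_j=\zeta_{j+1}\Rightarrow\gamma_{j+1}=\gamma_{j+2}$ can be chained indefinitely. The only difference is organizational: the paper states the two implications and concludes by ``combining'' them, whereas you spell out the induction and reduce part b) to part a); your version is slightly more explicit about the index bookkeeping (in particular the $k=j$ boundary case), but the underlying argument is identical.
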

\vspace*{1mm}
\begin{proof}
    Let $\gamma_i=\gamma_{i+1}$ and note that the uniqueness of solution $Y_{u_{p+}}$ implies
    $\mathcal{Y}_{u_{p+}}(\upsilon,\gamma_i) = \mathcal{Y}_{u_{p+}}(\upsilon,\gamma_{i+1})$ for every $\upsilon\in[\upsilon_{\min},\upsilon_{\max}]$,
    and consequently $\zeta_i=\mathcal{Y}_{u_{p+}}(\upsilon_{\max},\gamma_i) = \mathcal{Y}_{u_{p+}}(\upsilon_{\max},\gamma_{i+1})=\zeta_{i+1}$.
    Therefore, we have that
    \begin{equation*}
        \gamma_i=\gamma_{i+1} 
        \qquad \Rightarrow \qquad \zeta_i=\zeta_{i+1}.
    \end{equation*}
    Similarly, when $\zeta_j=\zeta_{j+1}$, the uniqueness of solution $Y_{u_{p-}}$ implies $\mathcal{Y}_{u_{p-}}(\upsilon,\zeta_j) = \mathcal{Y}_{u_{p-}}(\upsilon,\zeta_{j+1})$ for every $\upsilon\in[\upsilon_{\min},\upsilon_{\max}]$,
    and consequently $\gamma_{j+1}=\mathcal{Y}_{u_{p+}}(\upsilon_{\min},\zeta_j) = \mathcal{Y}_{u_{p-}}(\upsilon_{\min},\zeta_{j+1})=\gamma_{j+2}$.
    Thus we have that
    \begin{equation*}
        \zeta_j=\zeta_{j+1} 
        \qquad \Rightarrow \qquad \gamma_{j+1}=\gamma_{j+2}.
    \end{equation*}
    It follows that combining both implications proves both statements.
\end{proof}
\vspace*{1mm}

\vspace*{2mm}
\begin{lemma}\label{lemma:sequences_unbounded}
    Let $\gamma_0\in\R$ and consider the sequences $(\zeta_n)_{n\in\N_0}$ and $(\gamma_n)_{n\in\N_0}$ generated by \eqref{eq:zeta_n} and \eqref{eq:gamma_n+1}.
    The sequence $(\zeta_n)_{n\in\N_0}$ is unbounded if and only if $(\gamma_n)_{n\in\N_0}$ is unbounded. Moreover, when they are unbounded, they are strictly monotonic in the same direction (i.e. both strictly increasing or both strictly decreasing).
\end{lemma}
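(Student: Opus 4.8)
The plan is to deduce the statement from the three preceding lemmas, using only the elementary fact that a monotone real sequence that is unbounded must diverge to $+\infty$ or to $-\infty$. By Lemma~\ref{lemma:sequences_monotonic} the sequences $(\zeta_n)_{n\in\N_0}$ and $(\gamma_n)_{n\in\N_0}$ are monotone in one common direction, so by symmetry (replacing $\le$ by $\ge$ throughout) it suffices to treat the case in which both are increasing. The coupling between the two sequences is supplied by the recursion \eqref{eq:zeta_n}--\eqref{eq:gamma_n+1} together with the order-preservation of the parameterized outputs in their initial condition, i.e. Lemma~\ref{lemma:non_intersecting_calYu}(a) applied to $\mathcal{Y}_{u_{p+}}$ and $\mathcal{Y}_{u_{p-}}$.

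First I would prove: if $(\gamma_n)$ is unbounded then so is $(\zeta_n)$. Being increasing and unbounded, $\gamma_n\to+\infty$. Suppose, toward a contradiction, that $(\zeta_n)$ is bounded; being monotone and bounded it converges to a finite $\zeta^\ast$ with $\zeta_n\le\zeta^\ast$ for all $n$. Then Lemma~\ref{lemma:non_intersecting_calYu}(a) gives $\gamma_{n+1}=\mathcal{Y}_{u_{p-}}(\upsilon_{\min},\zeta_n)\le\mathcal{Y}_{u_{p-}}(\upsilon_{\min},\zeta^\ast)$ for every $n$, and the right-hand side is a fixed real number, so $(\gamma_n)$ is bounded above, contradicting $\gamma_n\to+\infty$. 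The converse implication is entirely symmetric, using the other recursion instead: if $(\zeta_n)$ is unbounded then $\zeta_n\to+\infty$, and if $(\gamma_n)$ were bounded, with $\gamma_n\le\gamma^\ast<\infty$, then $\zeta_n=\mathcal{Y}_{u_{p+}}(\upsilon_{\max},\gamma_n)\le\mathcal{Y}_{u_{p+}}(\upsilon_{\max},\gamma^\ast)$, a finite upper bound, contradicting $\zeta_n\to+\infty$. This yields the asserted equivalence.

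For the ``moreover'' part I would argue by contradiction that, under the unboundedness hypothesis, no two consecutive terms of either sequence coincide. If $\gamma_i=\gamma_{i+1}$ for some $i$, then Lemma~\ref{lemma:sequences_constant}(a) forces both $(\zeta_n)$ and $(\gamma_n)$ to be eventually constant, hence bounded; the case $\zeta_j=\zeta_{j+1}$ is handled identically via Lemma~\ref{lemma:sequences_constant}(b). Either possibility contradicts unboundedness, so both sequences are strictly monotone, and by Lemma~\ref{lemma:sequences_monotonic} in the same direction.

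I do not expect a genuine obstacle: the proof is a short assembly of the earlier lemmas. The one step carrying the actual content is the observation in the second paragraph — that once one of the sequences converges, order-preservation in the initial condition (Lemma~\ref{lemma:non_intersecting_calYu}(a)) lets the finite limit be fed a single time through $\mathcal{Y}_{u_{p-}}$ (resp. $\mathcal{Y}_{u_{p+}}$) to cap the tail of the other sequence; everything else is routine bookkeeping over the increasing/decreasing cases, which are mirror images of one another.
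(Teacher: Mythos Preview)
Your proof is correct. The argument for strict monotonicity (the ``moreover'' clause) is essentially identical to the paper's: both invoke Lemma~\ref{lemma:sequences_constant} to rule out any pair of equal consecutive terms, since that would force the tails to be constant and hence bounded.

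The equivalence part, however, is handled differently. The paper's proof invokes only Lemmas~\ref{lemma:sequences_monotonic} and~\ref{lemma:sequences_constant} and then simply asserts that unboundedness of one sequence, together with the absence of repeated consecutive terms, yields unboundedness of the other; as written this step is not fully justified, since strict monotonicity alone does not preclude a bounded (convergent) sequence. Your argument supplies the missing link: you use Lemma~\ref{lemma:non_intersecting_calYu}(a) to observe that if, say, $(\zeta_n)$ were bounded above by some $\zeta^\ast$, then feeding $\zeta^\ast$ once through $\mathcal{Y}_{u_{p-}}(\upsilon_{\min},\cdot)$ caps $(\gamma_n)$ as well, contradicting its unboundedness. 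This order-preservation step is exactly what makes the equivalence rigorous, so your route is slightly longer but closes a gap the paper's presentation leaves open.
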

\vspace*{1mm}
\begin{proof}
    To prove the if part, let $(\gamma_n)_{n\in\N_0}$ be unbounded and note that by Lemma \ref{lemma:sequences_monotonic} we have that both sequences $(\gamma_n)_{n\in\N_0}$ and $(\zeta_n)_{n\in\N_0}$ are monotonic in the same direction. Moreover, by Lemma \ref{lemma:sequences_constant}, assuming that $\gamma_i=\gamma_{i+1}$ or $\zeta_j=\zeta_{j+1}$ for some $i,j\in\N_0$ implies that $(\gamma_n)_{n\in\N_0}$ is not unbounded, which is a contradiction. Therefore $(\zeta_n)_{n\in\N_0}$ is also unbounded and both are strictly monotonic.
    
    To prove the only if part, let $(\zeta_n)_{n\in\N_0}$ be unbounded and note that by Lemma \ref{lemma:sequences_monotonic} we have that both sequences $(\gamma_n)_{n\in\N_0}$ and $(\zeta_n)_{n\in\N_0}$ are monotonic in the same direction. Moreover, by Lemma \ref{lemma:sequences_constant}, assuming that $\gamma_i=\gamma_{i+1}$ or $\zeta_j=\zeta_{j+1}$ for some $i,j\in\N_0$ implies that $(\zeta_n)_{n\in\N_0}$ is not unbounded, which is a contradiction. Therefore $(\gamma_n)_{n\in\N_0}$ is also unbounded and both are strictly monotonic.
\end{proof}
\vspace*{1mm}

In the next pair of propositions, we introduce the main results of this section where sufficient conditions are presented such that the sequences $(\gamma_n)_{n\in\N_0}$ and $(\zeta_n)_{n\in\N_0}$ generated by \eqref{eq:zeta_n} and \eqref{eq:gamma_n+1} are convergent. We remark that if the sequences $(\zeta_n)_{n\in\N_0}$ and $(\gamma_n)_{n\in\N_0}$ are convergent to some pair $\gamma_*\in\R$ and $\zeta_*\in\R$, respectively, then due to the continuity and uniqueness of solution of the Duhem operator we must have that
\begin{equation*}\label{eq:periodicity_condition}
    \begin{aligned}
        \mathcal{Y}_{u_{p+}}(\upsilon_{\max},\gamma_*)&=\zeta_*,\\
        \mathcal{Y}_{u_{p-}}(\upsilon_{\min},\zeta_*)&=\gamma_*,
    \end{aligned}
\end{equation*}
and consequently both parameterized solutions $\mathcal{Y}_{u_{p+}}(\upsilon,\gamma_*)$ and $\mathcal{Y}_{u_{p-}}(\upsilon,\zeta_*)$ form a periodic closed orbit in the phase plot. 
With the first proposition we present a pair of inequalities that ensure the convergence to some periodic orbit. These inequalities have been previously presented in \cite{Ikhouane2021} and used together with other set of conditions to prove the convergence of the output to a periodic function for a specific version of the Duhem model known as Babu$\check{\text{s}}$ka's model. We show that only these two conditions are sufficient to ensure the convergence of the output to a periodic function in the scalar rate-independent Duhem model. Subsequently, with the second proposition we show that the strict versions of the inequalities ensure the uniqueness of the pair $\gamma_*\in\R$ and $\zeta_*\in\R$ and consequently the uniqueness of the closed periodic orbit.

\vspace*{2mm}
\begin{proposition}\label{prop:sequences_bounded_convergent}
    If the functions $f_1$ and $f_2$ in \eqref{eq:duhem_model} satisfy
    \begin{align}
        \left( f_1(\upsilon,\gamma_1) - f_1(\upsilon,\gamma_2) \right)\left( \gamma_1 - \gamma_2 \right) &\leq 0, \label{eq:f1_monotonic_convergent}\\
        \left( f_2(\upsilon,\gamma_1) - f_2(\upsilon,\gamma_2) \right)\left( \gamma_1 - \gamma_2 \right) &\geq 0, \label{eq:f2_monotonic_convergent}
    \end{align}
    for every $\gamma_1\neq\gamma_2$ and $\upsilon\in\R$,
    then for every $\gamma_0\in\R$ 
    the sequences $(\zeta_n)_{n\in\N_0}$ and $(\gamma_n)_{n\in\N_0}$ generated by \eqref{eq:zeta_n} and \eqref{eq:gamma_n+1} are convergent.
\end{proposition}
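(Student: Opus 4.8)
The plan is to combine Lemmas~\ref{lemma:sequences_monotonic} and~\ref{lemma:sequences_unbounded} with a comparison argument anchored at the anhysteresis curve $\mathcal{A}$. Since Lemma~\ref{lemma:sequences_monotonic} makes $(\gamma_n)_{n\in\N_0}$ and $(\zeta_n)_{n\in\N_0}$ monotone in the same direction, and a bounded monotone real sequence converges, it suffices to rule out the unbounded case. So I would argue by contradiction: if $(\gamma_n)$ is unbounded, then by Lemma~\ref{lemma:sequences_unbounded} so is $(\zeta_n)$, and by Lemma~\ref{lemma:sequences_monotonic} both diverge in the same direction; take $\gamma_n\to+\infty$ and $\zeta_n\to+\infty$, the other sign being symmetric.

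The first and main step is to show that, for $n$ large, the $n$-th loop lies strictly above $\mathcal{A}$. Write $p_n(\upsilon):=\mathcal{Y}_{u_{p+}}(\upsilon,\gamma_n)$ and $q_n(\upsilon):=\mathcal{Y}_{u_{p-}}(\upsilon,\zeta_n)$ on $[\upsilon_{\min},\upsilon_{\max}]$; by \eqref{eq:integral_Yu+}--\eqref{eq:integral_Yu-} these are $C^1$ with $p_n'=f_1(\upsilon,p_n)$, $q_n'=f_2(\upsilon,q_n)$, $p_n(\upsilon_{\min})=\gamma_n$, $p_n(\upsilon_{\max})=q_n(\upsilon_{\max})=\zeta_n$, and $q_n(\upsilon_{\min})=\gamma_{n+1}$, and by statement a) of Lemma~\ref{lemma:non_intersecting_calYu} they are non-decreasing in $n$ for each fixed $\upsilon$. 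If some $p_n(\upsilon^\dagger)$ stayed bounded it would converge, and since \eqref{eq:f1_monotonic_convergent} makes $\upsilon\mapsto(p_n(\upsilon)-\tilde p(\upsilon))^2$ non-increasing for any other solution $\tilde p$ of $p'=f_1(\upsilon,p)$, continuous dependence would force $\zeta_n=p_n(\upsilon_{\max})$ to converge, contradicting $\zeta_n\to+\infty$; hence $p_n(\upsilon)\to+\infty$ for every $\upsilon$. The analogous argument on $[\upsilon_{\min},\upsilon^\dagger]$, using \eqref{eq:f2_monotonic_convergent} and $q_n(\upsilon_{\min})=\gamma_{n+1}\to+\infty$, gives $q_n(\upsilon)\to+\infty$ for every $\upsilon$. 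A Dini-type argument (monotone, continuous, pointwise to $+\infty$ on a compact interval) upgrades this to uniform divergence, so for $n$ large both $p_n$ and $q_n$ exceed $\max_{[\upsilon_{\min},\upsilon_{\max}]}\alpha$.

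The second step is a direct comparison on that loop. From \eqref{eq:f1_monotonic_convergent}--\eqref{eq:f2_monotonic_convergent} the map $\gamma\mapsto f_1(\upsilon,\gamma)-f_2(\upsilon,\gamma)$ is non-increasing and vanishes only at $\alpha(\upsilon)$, so $f_1-f_2<0$ strictly on $\{\gamma>\alpha(\upsilon)\}$. Set $\phi_n:=q_n-p_n$, so $\phi_n(\upsilon_{\max})=0$ and $\phi_n'=f_2(\upsilon,q_n)-f_1(\upsilon,p_n)$. Then $\phi_n'(\upsilon_{\max})=f_2(\upsilon_{\max},\zeta_n)-f_1(\upsilon_{\max},\zeta_n)>0$, so $\phi_n<0$ just to the left of $\upsilon_{\max}$; and at any $\upsilon^\circ<\upsilon_{\max}$ with $\phi_n(\upsilon^\circ)=0$, the common value $y^\circ:=p_n(\upsilon^\circ)$ satisfies $y^\circ>\alpha(\upsilon^\circ)$, whence $\phi_n'(\upsilon^\circ)=-(f_1-f_2)(\upsilon^\circ,y^\circ)>0$; a standard barrier argument then yields $\phi_n(\upsilon)<0$ for all $\upsilon\in[\upsilon_{\min},\upsilon_{\max})$. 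In particular $\gamma_{n+1}=q_n(\upsilon_{\min})<p_n(\upsilon_{\min})=\gamma_n$, contradicting that $(\gamma_n)$ is non-decreasing. In the symmetric case $\gamma_n,\zeta_n\to-\infty$ the loop eventually lies below $\mathcal{A}$, where $f_1-f_2>0$, and the same comparison gives $\gamma_{n+1}>\gamma_n$, again a contradiction. Hence $(\gamma_n)$ and $(\zeta_n)$ are bounded and, being monotone, convergent.

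The hard part is the first step: converting the mere divergence of $(\zeta_n)$ into pointwise—then uniform—divergence of the entire branch trajectories $p_n,q_n$, so that the loop genuinely separates from $\mathcal{A}$. This is exactly where the standing assumption that $\mathcal{A}$ is a graph $\gamma=\alpha(\upsilon)$ is used in an essential way — without it the sequences can genuinely diverge (e.g. $f_1\equiv1$, $f_2\equiv0$) — and the one-sided Lipschitz content of \eqref{eq:f1_monotonic_convergent}--\eqref{eq:f2_monotonic_convergent}, through continuous dependence and a compactness (Dini) step, is what makes this go through.
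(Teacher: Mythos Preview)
Your proof is correct and follows the same contradiction strategy as the paper: assume one sequence diverges, use Lemmas~\ref{lemma:sequences_monotonic}--\ref{lemma:sequences_unbounded} to get both sequences unbounded and strictly monotone in the same direction, argue that for large $n$ the entire $n$-th loop lies on one side of the anhysteresis curve $\mathcal{A}$, and extract a sign on $\gamma_{n+1}-\gamma_n$ that contradicts the assumed monotonicity.

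Two differences are worth noting. First, the paper is rather brief about why the whole trajectories $p_n,q_n$ eventually sit above $\mathcal{A}$ (it simply asserts this ``by Lemma~\ref{lemma:non_intersecting_calYu}''); your contraction-plus-Dini argument actually fills this step in rigorously. Second, for the contradiction itself the paper takes a shorter route than your barrier argument on $\phi_n=q_n-p_n$: it records the integral identity
\[
\gamma_{i+1}-\gamma_i=\int_{\upsilon_{\min}}^{\upsilon_{\max}}\big[f_1(\upsilon,p_i)-f_1(\upsilon,\alpha(\upsilon))\big]\,\dd\upsilon-\int_{\upsilon_{\min}}^{\upsilon_{\max}}\big[f_2(\upsilon,q_i)-f_2(\upsilon,\alpha(\upsilon))\big]\,\dd\upsilon,
\]
obtained by inserting $f_1(\upsilon,\alpha(\upsilon))=f_2(\upsilon,\alpha(\upsilon))$ into \eqref{eq:gamma_i+1-gamma_i}. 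Once $p_i>\alpha$ and $q_i>\alpha$ pointwise, conditions \eqref{eq:f1_monotonic_convergent}--\eqref{eq:f2_monotonic_convergent} make each integrand sign-definite, giving $\gamma_{i+1}-\gamma_i\le 0$ directly. This avoids any ODE comparison and, unlike your barrier step, does not need the \emph{strict} inequality $f_1-f_2<0$ above $\mathcal{A}$ (hence does not invoke the uniqueness part of the anhysteresis assumption).
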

\vspace*{1mm}
\begin{proof}
    It follows from \eqref{eq:integral_Yu+} and \eqref{eq:integral_Yu-} that the difference between two consecutive elements in 
    $(\gamma_n)_{n\in\N_0}$ is given by
    \begin{equation}\label{eq:gamma_i+1-gamma_i}
        \gamma_{i+1}-\gamma_i
        = \int_{\upsilon_{\min}}^{\upsilon_{\max}} \Big\{ 
            f_1\left( \upsilon,\mathcal{Y}_{u_{p+}}\left(\upsilon,\gamma_i\right) \right) 
          - f_2\left( \upsilon,\mathcal{Y}_{u_{p-}}\left(\upsilon,\zeta_i\right) \right)
            \Big\}\ \dd\upsilon
    \end{equation}
    Moreover, since by the definition of anhysteresis function $\alpha$, we have $f_1(\upsilon,\alpha(\upsilon))=f_2(\upsilon,\alpha(\upsilon))$ for every $\upsilon\in[\upsilon_{\min},\upsilon_{\max}]$, then we can add and subtract these terms inside the integral and obtain
    \begin{equation}\label{eq:gamma_i+1-gamma_i+-alpha}
        \begin{aligned}
            \gamma_{i+1}-\gamma_i
            &=\int_{\upsilon_{\min}}^{\upsilon_{\max}} \Big\{ 
                f_1\left( \upsilon,\mathcal{Y}_{u_{p+}}\left(\upsilon,\gamma_i\right) \right) 
              - f_1\left( \upsilon,\alpha(\upsilon) \right) 
                \Big\}\ \dd\upsilon \\
            &\hphantom{=}-\int_{\upsilon_{\min}}^{\upsilon_{\max}} \Big\{ 
                f_2\left( \upsilon,\mathcal{Y}_{u_{p-}}\left(\upsilon,\zeta_i\right) \right) 
              - f_2\left( \upsilon,\alpha(\upsilon) \right) 
              \Big\}\ \dd\upsilon.
        \end{aligned}
    \end{equation}
    
    We prove the proposition by contradiction. Assume that any of the sequences $(\zeta_n)_{n\in\N_0}$ or $(\gamma_n)_{n\in\N_0}$ is not convergent. Thus by Lemmas \ref{lemma:sequences_monotonic}, \ref{lemma:sequences_constant} and \ref{lemma:sequences_unbounded} both are unbounded and strictly monotonic in the same direction.
    
    On the one hand, if both are strictly increasing, then by Lemma \ref{lemma:non_intersecting_calYu} we can find two pairs $\gamma_i<\gamma_{i+1}$ and $\zeta_i<\zeta_{i+1}$ such that both $\mathcal{Y}_{u_{p+}}(\upsilon,\gamma_i)$ and $\mathcal{Y}_{u_{p-}}(\upsilon,\zeta_i)$ lie completely above the anhysteresis curve $\mathcal{A}$ given in \eqref{eq:anhysteresis_curve} (see Fig. \ref{fig:gamma_unbounded_above}). In other words, we have
    \begin{equation*}
        \mathcal{Y}_{u_{p+}}(\upsilon,\gamma_i)>\alpha(\upsilon)
        \quad\text{ and }\quad
        \mathcal{Y}_{u_{p-}}(\upsilon,\zeta_i)>\alpha(\upsilon),
    \end{equation*}
    for every $\upsilon\in[\upsilon_{\min},\upsilon_{\max}]$ and some $i\in\N_0$, where $\alpha$ is the anhysteresis function \eqref{eq:anhysteresis_function}. It follows from \eqref{eq:f1_monotonic_convergent} and \eqref{eq:f2_monotonic_convergent} that we have
    \begin{align*}
        f_1\left( \upsilon,\mathcal{Y}_{u_{p+}}\left(\upsilon,\gamma_i\right) \right) 
                - f_1\left( \upsilon,\alpha(\upsilon) \right) &\leq 0,\\
        f_2\left( \upsilon,\mathcal{Y}_{u_{p-}}\left(\upsilon,\zeta_i\right) \right) 
                - f_2\left( \upsilon,\alpha(\upsilon) \right) &\geq 0,
    \end{align*}
    for every $\upsilon\in[\upsilon_{\min},\upsilon_{\max}]$. 
    Consequently the right term of \eqref{eq:gamma_i+1-gamma_i+-alpha} is negative or zero, which is a contradiction since by the assumption the sequence is strictly increasing and $\gamma_{i+1}-\gamma_i>0$ for every $i\in\N_0$.
    
    On the other hand, if both sequences $(\zeta_n)_{n\in\N_0}$ and $(\gamma_n)_{n\in\N_0}$ are strictly decreasing, 
    then also by Lemma \ref{lemma:non_intersecting_calYu} we can find two pairs $\gamma_i>\gamma_{i+1}$ and $\zeta_i>\zeta_{i+1}$ such that both $\mathcal{Y}_{u_{p+}}(\upsilon,\gamma_i)$ and $\mathcal{Y}_{u_{p-}}(\upsilon,\zeta_i)$ lie completely below the anhysteresis curve $\mathcal{A}$ (see Fig. \ref{fig:gamma_unbounded_below}), and we have
    \begin{equation*}
        \mathcal{Y}_{u_{p+}}(\upsilon,\gamma_i)<\alpha(\upsilon)
        \quad\text{ and }\quad
        \mathcal{Y}_{u_{p-}}(\upsilon,\zeta_i)<\alpha(\upsilon),
    \end{equation*}
    for every $\upsilon\in[\upsilon_{\min},\upsilon_{\max}]$ and some $i\in\N_0$. Similar as before, from \eqref{eq:f1_monotonic_convergent} and \eqref{eq:f2_monotonic_convergent} we have that
    \begin{align*}
        f_1\left( \upsilon,\mathcal{Y}_{u_{p+}}\left(\upsilon,\gamma_i\right) \right) 
                - f_1\left( \upsilon,\alpha(\upsilon) \right) &\geq 0,\\
        f_2\left( \upsilon,\mathcal{Y}_{u_{p-}}\left(\upsilon,\zeta_i\right) \right) 
                - f_2\left( \upsilon,\alpha(\upsilon) \right) &\leq 0,
    \end{align*}
    for every $\upsilon\in[\upsilon_{\min},\upsilon_{\max}]$. 
    Consequently, the right term of \eqref{eq:gamma_i+1-gamma_i+-alpha} is positive or zero, which is a contradiction since by the assumption the sequence is strictly decreasing  and $\gamma_{i+1}-\gamma_i<0$ for every $i\in\N_0$.
    
    Therefore, both sequences are bounded, and since by Lemma \ref{lemma:sequences_monotonic} they are monotonic, then they are convergent.
    \begin{figure} 
        \centering
        \begin{subfigure}{0.50\linewidth}
            \centering
            \includegraphics[width=1.0\linewidth]{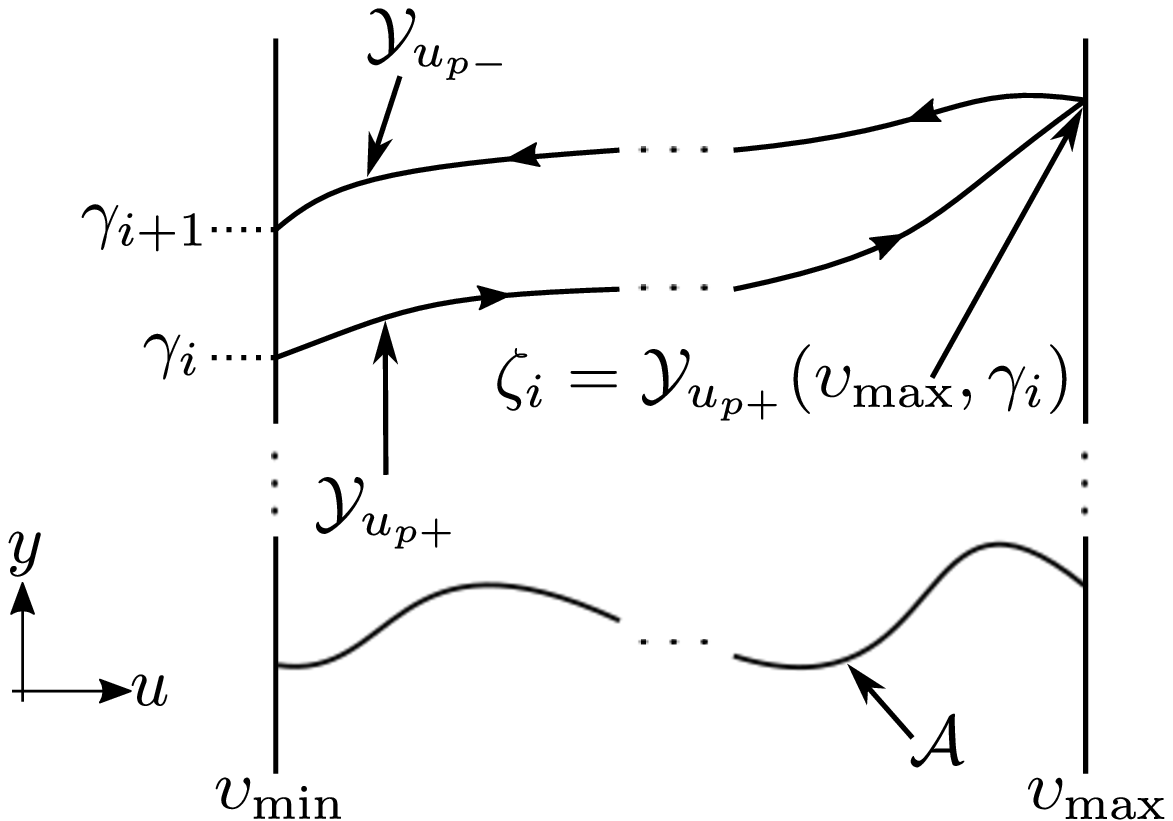}
            \caption{Strictly increasing $(\gamma_n)_{n\in\N_0}$. \label{fig:gamma_unbounded_above}}
        \end{subfigure}%
        \hspace*{1mm}%
        \begin{subfigure}{0.50\linewidth}
            \centering
            \includegraphics[width=1.0\linewidth]{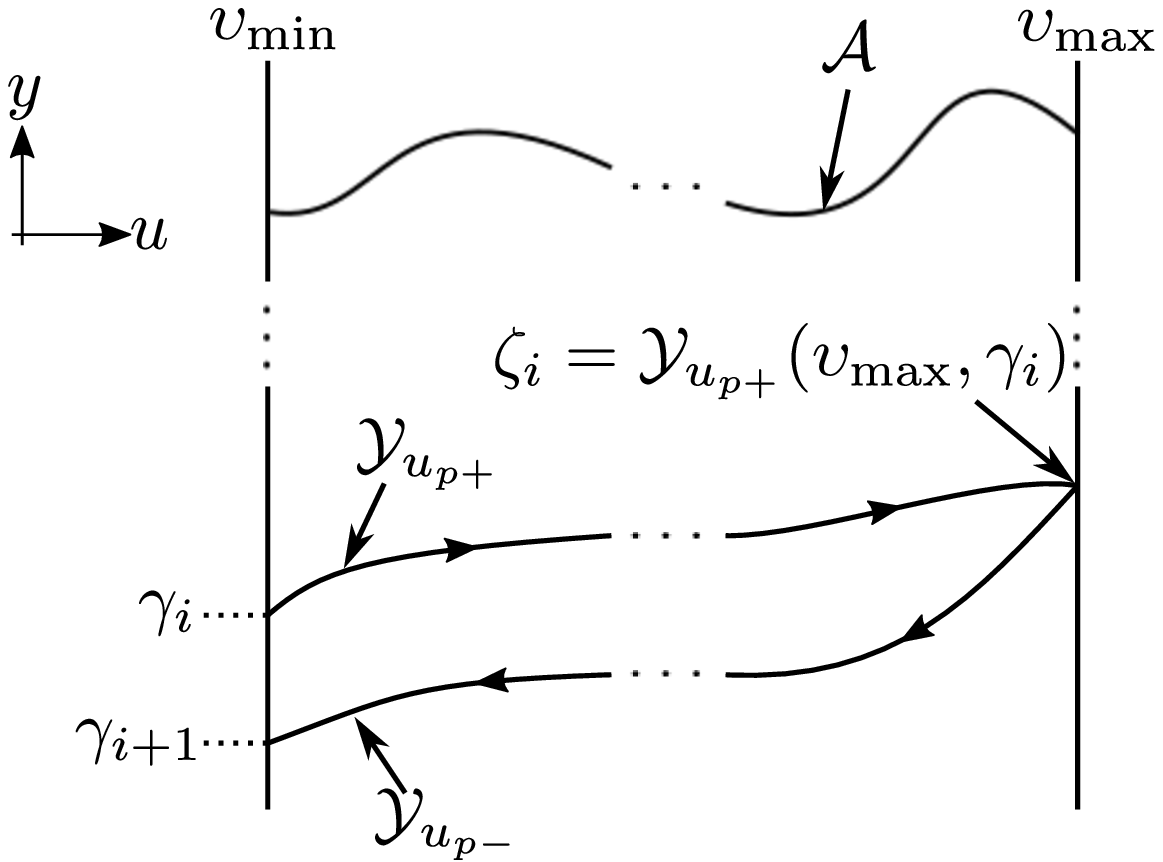}
            \caption{Strictly decreasing $(\gamma_n)_{n\in\N_0}$. \label{fig:gamma_unbounded_below}}
        \end{subfigure}
        \caption{Contradiction cases in proof of Proposition \ref{prop:sequences_bounded_convergent} for strictly monotonic unbounded sequence $(\gamma_n)_{n\in\N_0}$.}
    \end{figure}
\end{proof}
\vspace*{1mm}

\begin{proposition}\label{prop:sequences_convergent_unique}
    If the functions $f_1$ and $f_2$ in \eqref{eq:duhem_model} satisfy the strict version of inequalities \eqref{eq:f1_monotonic_convergent} and \eqref{eq:f2_monotonic_convergent} given by
    \begin{align}
        \left( f_1(\upsilon,\gamma_1) - f_1(\upsilon,\gamma_2) \right)\left( \gamma_1 - \gamma_2 \right) &< 0, \label{eq:f1_strictly_monotonic_convergent}\\
        \left( f_2(\upsilon,\gamma_1) - f_2(\upsilon,\gamma_2) \right)\left( \gamma_1 - \gamma_2 \right) &> 0, \label{eq:f2_strictly_monotonic_convergent}
    \end{align}
    for every $\gamma_1\neq\gamma_2$ and $\upsilon\in\R$,
    then there exist a unique pair $\gamma_*,\zeta_*\in\R$ such that for every $\gamma_0\in\R$ 
    the sequences generated by \eqref{eq:zeta_n} and \eqref{eq:gamma_n+1} 
    satisfy $(\gamma_n)_{n\in\N_0}\to\gamma_*$ and $(\zeta_n)_{n\in\N_0}\to\zeta_*$ where $\zeta_*=\mathcal{Y}_{u_{p+}}(\upsilon_{\max},\gamma_*)$.
\end{proposition}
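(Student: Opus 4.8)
The plan is to split the statement into an \emph{existence} part, which is essentially inherited from Proposition~\ref{prop:sequences_bounded_convergent}, and a \emph{uniqueness} part, which carries all the new content. First I would observe that \eqref{eq:f1_strictly_monotonic_convergent}--\eqref{eq:f2_strictly_monotonic_convergent} imply \eqref{eq:f1_monotonic_convergent}--\eqref{eq:f2_monotonic_convergent}, so Proposition~\ref{prop:sequences_bounded_convergent} applies: for \emph{any} $\gamma_0\in\R$ the sequences $(\gamma_n)_{n\in\N_0}$ and $(\zeta_n)_{n\in\N_0}$ converge, and (by the continuity and uniqueness of solutions of \eqref{eq:duhem_model}, as already noted before Proposition~\ref{prop:sequences_bounded_convergent}) their limits $\gamma_*,\zeta_*$ satisfy the periodicity relations $\zeta_*=\mathcal{Y}_{u_{p+}}(\upsilon_{\max},\gamma_*)$ and $\gamma_*=\mathcal{Y}_{u_{p-}}(\upsilon_{\min},\zeta_*)$. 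Hence a pair with the asserted properties exists; what remains is to show that it is the \emph{same} pair for every $\gamma_0$, and for this it suffices to show that the two periodicity relations have a unique solution $(\gamma_*,\zeta_*)$.

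For uniqueness I would argue by contradiction: suppose $(\gamma_*,\zeta_*)$ and $(\hat\gamma_*,\hat\zeta_*)$ both satisfy the periodicity relations and, without loss of generality, $\gamma_*\le\hat\gamma_*$. Lemma~\ref{lemma:non_intersecting_calYu}(a) applied to $\mathcal{Y}_{u_{p+}}$ gives $\mathcal{Y}_{u_{p+}}(\upsilon,\gamma_*)\le\mathcal{Y}_{u_{p+}}(\upsilon,\hat\gamma_*)$ for all $\upsilon\in[\upsilon_{\min},\upsilon_{\max}]$; evaluating at $\upsilon_{\max}$ and using the periodicity relations gives $\zeta_*\le\hat\zeta_*$, and Lemma~\ref{lemma:non_intersecting_calYu}(a) applied to $\mathcal{Y}_{u_{p-}}$ then gives $\mathcal{Y}_{u_{p-}}(\upsilon,\zeta_*)\le\mathcal{Y}_{u_{p-}}(\upsilon,\hat\zeta_*)$ for all $\upsilon$. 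Thus the two candidate periodic orbits are ordered in the phase plane.

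Next I would use the integral representations \eqref{eq:integral_Yu+}--\eqref{eq:integral_Yu-} in the periodic form used to derive \eqref{eq:gamma_i+1-gamma_i}: at a fixed point the net increment over one period vanishes, i.e.
\begin{equation*}
0=\int_{\upsilon_{\min}}^{\upsilon_{\max}}\Big\{f_1\big(\upsilon,\mathcal{Y}_{u_{p+}}(\upsilon,\gamma_*)\big)-f_2\big(\upsilon,\mathcal{Y}_{u_{p-}}(\upsilon,\zeta_*)\big)\Big\}\,\dd\upsilon,
\end{equation*}
and likewise for $(\hat\gamma_*,\hat\zeta_*)$. Subtracting the two identities, the integrand becomes
\begin{equation*}
\Big[f_1\big(\upsilon,\mathcal{Y}_{u_{p+}}(\upsilon,\gamma_*)\big)-f_1\big(\upsilon,\mathcal{Y}_{u_{p+}}(\upsilon,\hat\gamma_*)\big)\Big]+\Big[f_2\big(\upsilon,\mathcal{Y}_{u_{p-}}(\upsilon,\hat\zeta_*)\big)-f_2\big(\upsilon,\mathcal{Y}_{u_{p-}}(\upsilon,\zeta_*)\big)\Big],
\end{equation*}
which, by the orderings obtained above together with the strict monotonicity \eqref{eq:f1_strictly_monotonic_convergent}--\eqref{eq:f2_strictly_monotonic_convergent} of $f_1$ (decreasing in its second argument) and $f_2$ (increasing in its second argument), is a continuous, nonnegative function of $\upsilon$. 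A continuous nonnegative function whose integral over the nondegenerate interval $[\upsilon_{\min},\upsilon_{\max}]$ vanishes must be identically zero, so in particular $f_1(\upsilon,\mathcal{Y}_{u_{p+}}(\upsilon,\gamma_*))=f_1(\upsilon,\mathcal{Y}_{u_{p+}}(\upsilon,\hat\gamma_*))$ for all $\upsilon$; the strict inequality \eqref{eq:f1_strictly_monotonic_convergent} then forces $\mathcal{Y}_{u_{p+}}(\upsilon,\gamma_*)=\mathcal{Y}_{u_{p+}}(\upsilon,\hat\gamma_*)$ for all $\upsilon$, and evaluating at $\upsilon_{\min}$ yields $\gamma_*=\hat\gamma_*$ (hence also $\zeta_*=\hat\zeta_*$), contradicting the assumption that the pairs are distinct.

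Combining the two parts finishes the proof: for an arbitrary $\gamma_0$, Proposition~\ref{prop:sequences_bounded_convergent} gives convergence of $(\gamma_n)_{n\in\N_0}$ and $(\zeta_n)_{n\in\N_0}$ to a pair satisfying the periodicity relations, and by the uniqueness just established this pair is the single pair $(\gamma_*,\zeta_*)$ with $\zeta_*=\mathcal{Y}_{u_{p+}}(\upsilon_{\max},\gamma_*)$. The step I expect to require the most care is the sign bookkeeping in the subtraction: one must check that the ordering of the parameterized solutions feeds through \eqref{eq:f1_strictly_monotonic_convergent}--\eqref{eq:f2_strictly_monotonic_convergent} with matching signs so that the combined integrand is genuinely nonnegative, after which the passage from ``zero integral'' back to ``the orbits coincide'' via continuity and strict monotonicity is routine.
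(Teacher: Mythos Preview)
Your proposal is correct and follows essentially the same approach as the paper: both argue uniqueness by contradiction, subtract the two fixed-point integral identities coming from \eqref{eq:gamma_i+1-gamma_i}, and use the strict monotonicity \eqref{eq:f1_strictly_monotonic_convergent}--\eqref{eq:f2_strictly_monotonic_convergent} together with Lemma~\ref{lemma:non_intersecting_calYu} to force a sign contradiction. Your version is in fact slightly more explicit than the paper's---you separate out the existence step via Proposition~\ref{prop:sequences_bounded_convergent} and you close the argument by passing from ``nonnegative integrand with zero integral'' to ``identically zero'' and then to ``orbits coincide,'' whereas the paper simply asserts the subtracted integral is strictly signed; but the underlying mechanism is the same.
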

\vspace*{1mm}
\begin{proof}
    We proceed by contradiction, assuming that the sequences $(\gamma_n)_{n\in\N_0}$ and $(\zeta_n)_{n\in\N_0}$ approach different values if we used different initial values.
    For this, assume there exists $\bar{\gamma}\neq\gamma_*$ and two initial values $\gamma_i\neq\gamma_j$ such that 
    $\displaystyle\lim_{i\to\infty}(\gamma_i-\gamma_*)=0$ and $\displaystyle\lim_{j\to\infty}(\gamma_j-\bar{\gamma})=0$. 
    We can subtract both limits and use \eqref{eq:gamma_i+1-gamma_i} to obtain
    \begin{equation}\label{eq:(gamma_i-gamma_*)-(gamma_j-bargamma)}
        \begin{aligned}
            0&=\displaystyle\lim_{i\to\infty}(\gamma_i-\gamma_*) 
                - \displaystyle\lim_{j\to\infty}(\gamma_j-\bar{\gamma}) \\
            &=\int_{\upsilon_{\min}}^{\upsilon_{\max}} \Big\{ 
                        f_1\left( \upsilon,\mathcal{Y}_{u_{p+}}\left(\upsilon,\gamma_*\right) \right) 
                      - f_2\left( \upsilon,\mathcal{Y}_{u_{p-}}\left(\upsilon,\zeta_*\right) \right)
                        \Big\}\ \dd\upsilon\\
            &\hphantom{=}-\int_{\upsilon_{\min}}^{\upsilon_{\max}} \Big\{ 
                        f_1\left( \upsilon,\mathcal{Y}_{u_{p+}}\left(\upsilon,\bar{\gamma}\right) \right) 
                      - f_2\left( \upsilon,\mathcal{Y}_{u_{p-}}\left(\upsilon,\bar{\zeta}\right) \right)
                        \Big\}\ \dd\upsilon\\
            &=\int_{\upsilon_{\min}}^{\upsilon_{\max}} \Big\{ 
                f_1\left( \upsilon,\mathcal{Y}_{u_{p+}}\left(\upsilon,\gamma_*\right) \right) 
              - f_1\left( \upsilon,\mathcal{Y}_{u_{p+}}\left(\upsilon,\bar{\gamma}\right) \right) 
                \Big\}\ \dd\upsilon \\
            &\hphantom{=}-\int_{\upsilon_{\min}}^{\upsilon_{\max}} \Big\{ 
                f_2\left( \upsilon,\mathcal{Y}_{u_{p-}}\left(\upsilon,\zeta_*\right) \right)
              - f_2\left( \upsilon,\mathcal{Y}_{u_{p-}}\left(\upsilon,\bar{\zeta}\right) \right)
                \Big\}\ \dd\upsilon
        \end{aligned}
    \end{equation}
    where $\zeta_*=\mathcal{Y}_{u_{p+}}(\upsilon_{\max},\gamma_*)$ and $\bar{\zeta}=\mathcal{Y}_{u_{p+}}(\upsilon_{\max},\bar{\gamma})$. Then, by \eqref{eq:f1_strictly_monotonic_convergent} and \eqref{eq:f2_strictly_monotonic_convergent} and Lemma \ref{lemma:non_intersecting_calYu}, the right term of the last expression is positive (resp. negative) when $\gamma_*<\bar{\gamma}$ and $\zeta*<\bar{\zeta}$ (resp. $\gamma_*>\bar{\gamma}$ and $\zeta*>\bar{\zeta}$), which is a contradiction.
\end{proof}
\vspace*{1mm}

Finally, to complement the previous two propositions, we also establish conditions such that the sequences $(\gamma_n)_{n\in\N_0}$ and $(\zeta_n)_{n\in\N_0}$ generated by \eqref{eq:zeta_n} and \eqref{eq:gamma_n+1} are divergent.
The conditions are presented in the form of a corollary since they follow immediately from \eqref{eq:gamma_i+1-gamma_i+-alpha} and the analysis in Proposition \ref{prop:sequences_bounded_convergent}.

\begin{corollary}\label{coro:sequences_divergent}
    If the functions $f_1$ and $f_2$ in the Duhem model \eqref{eq:duhem_model} satisfy the reversed inequalities to \eqref{eq:f1_strictly_monotonic_convergent} and \eqref{eq:f2_strictly_monotonic_convergent}, which are given by
    \begin{align}
        \left( f_1(\upsilon,\gamma_1) - f_1(\upsilon,\gamma_2) \right)\left( \gamma_1 - \gamma_2 \right) &> 0, \label{eq:f1_monotonic_divergent}\\
        \left( f_2(\upsilon,\gamma_1) - f_2(\upsilon,\gamma_2) \right)\left( \gamma_1 - \gamma_2 \right) &< 0, \label{eq:f2_monotonic_divergent}
    \end{align}
    for every $\gamma_1\neq\gamma_2$ and $\upsilon\in\R$,
    then for every $\gamma_0\in\R$ 
    the sequences $(\zeta_n)_{n\in\N_0}$ and $(\gamma_n)_{n\in\N_0}$ generated by \eqref{eq:zeta_n} and \eqref{eq:gamma_n+1} are divergent.
\end{corollary}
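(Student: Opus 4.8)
The plan is to combine the contradiction scheme of Proposition~\ref{prop:sequences_bounded_convergent} with the subtraction argument from the proof of Proposition~\ref{prop:sequences_convergent_unique}. By Lemma~\ref{lemma:sequences_monotonic} the sequences $(\gamma_n)_{n\in\N_0}$ and $(\zeta_n)_{n\in\N_0}$ are monotonic in the same direction, so each converges if and only if it is bounded, and by Lemma~\ref{lemma:sequences_constant} they are either eventually constant or strictly monotonic. It therefore suffices to rule out a \emph{bounded, strictly monotonic} pair of sequences: once this is excluded, strict monotonicity entails unboundedness, hence divergence.

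I would then suppose, for contradiction, that $(\gamma_n)_{n\in\N_0}$ is strictly monotonic and bounded. By Lemmas~\ref{lemma:sequences_unbounded} and~\ref{lemma:sequences_monotonic}, $(\zeta_n)_{n\in\N_0}$ is bounded as well and both sequences converge, say $\gamma_n\to\gamma_*$ and $\zeta_n\to\zeta_*$ with $\zeta_*=\mathcal{Y}_{u_{p+}}(\upsilon_{\max},\gamma_*)$ by continuous dependence of the Duhem solution on its initial value. Passing to the limit in \eqref{eq:gamma_i+1-gamma_i} (the left side tends to $0$, the right side by continuity) gives $\int_{\upsilon_{\min}}^{\upsilon_{\max}}\big[f_1(\upsilon,\mathcal{Y}_{u_{p+}}(\upsilon,\gamma_*))-f_2(\upsilon,\mathcal{Y}_{u_{p-}}(\upsilon,\zeta_*))\big]\,\dd\upsilon=0$; subtracting this from \eqref{eq:gamma_i+1-gamma_i} at index $n$, exactly as in the derivation of \eqref{eq:(gamma_i-gamma_*)-(gamma_j-bargamma)}, yields
\[
\begin{aligned}
\gamma_{n+1}-\gamma_n
&=\int_{\upsilon_{\min}}^{\upsilon_{\max}}\Big[ f_1\left(\upsilon,\mathcal{Y}_{u_{p+}}\left(\upsilon,\gamma_n\right)\right) - f_1\left(\upsilon,\mathcal{Y}_{u_{p+}}\left(\upsilon,\gamma_*\right)\right)\Big]\,\dd\upsilon\\
&\hphantom{=}-\int_{\upsilon_{\min}}^{\upsilon_{\max}}\Big[ f_2\left(\upsilon,\mathcal{Y}_{u_{p-}}\left(\upsilon,\zeta_n\right)\right) - f_2\left(\upsilon,\mathcal{Y}_{u_{p-}}\left(\upsilon,\zeta_*\right)\right)\Big]\,\dd\upsilon .
\end{aligned}
\]
Taking without loss of generality $(\gamma_n)_{n\in\N_0}$ strictly increasing, one has $\gamma_n<\gamma_*$ and, since by Lemmas~\ref{lemma:sequences_monotonic} and~\ref{lemma:sequences_constant} the two sequences move strictly together, also $\zeta_n<\zeta_*$; then Lemma~\ref{lemma:non_intersecting_calYu} together with uniqueness of the Duhem solution gives $\mathcal{Y}_{u_{p+}}(\upsilon,\gamma_n)<\mathcal{Y}_{u_{p+}}(\upsilon,\gamma_*)$ and $\mathcal{Y}_{u_{p-}}(\upsilon,\zeta_n)<\mathcal{Y}_{u_{p-}}(\upsilon,\zeta_*)$ for all $\upsilon\in[\upsilon_{\min},\upsilon_{\max}]$. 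By \eqref{eq:f1_monotonic_divergent} the first integrand is then strictly negative and by \eqref{eq:f2_monotonic_divergent} the second is strictly positive, so $\gamma_{n+1}-\gamma_n<0$, contradicting the strict increase. The strictly decreasing case is symmetric, which closes the contradiction, and divergence follows for every $\gamma_0$ for which the sequences are not constant.

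I expect the main obstacle to be the bookkeeping around that last clause: showing that a bounded strictly monotonic sequence must converge to a genuine fixed point of the one-period return map, so that \eqref{eq:gamma_i+1-gamma_i} vanishes in the limit, and upgrading the weak comparison of Lemma~\ref{lemma:non_intersecting_calYu}(a) to a strict one over the whole interval whenever the initial data differ; once these are settled the sign analysis via \eqref{eq:f1_monotonic_divergent}--\eqref{eq:f2_monotonic_divergent} is immediate. I would also note that, strictly speaking, the quantifier ``for every $\gamma_0$'' should exclude the single repelling value (if it exists) at which the sequences are constant rather than divergent, e.g. $\gamma_0=0$ for $f_1(\upsilon,\gamma)=\gamma$, $f_2(\upsilon,\gamma)=-\gamma$.
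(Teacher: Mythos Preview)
Your argument is correct, and it takes a genuinely different route from the one the paper has in mind. The paper states the result as a corollary and points to \eqref{eq:gamma_i+1-gamma_i+-alpha} together with the analysis of Proposition~\ref{prop:sequences_bounded_convergent}; that is, the intended reference curve for the sign analysis is the anhysteresis function $\alpha$, and one reverses the inequalities in the two cases of Fig.~\ref{fig:gamma_unbounded_above}--\ref{fig:gamma_unbounded_below} to see that, once the orbit lies on one side of $\mathcal A$, the increment $\gamma_{i+1}-\gamma_i$ pushes it further away. You instead borrow the subtraction device from Proposition~\ref{prop:sequences_convergent_unique} and use the hypothetical limit $\gamma_*$ as the reference curve. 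This buys you two things: you never need to invoke the standing assumption that the anhysteresis function $\alpha$ exists, and you bypass the extra step (implicit in the paper's sketch) of arguing that a bounded monotone orbit must eventually lie entirely on one side of $\mathcal A$. Under \eqref{eq:f1_monotonic_divergent}--\eqref{eq:f2_monotonic_divergent} the strict separation of $\mathcal{Y}_{u_{p+}}(\cdot,\gamma_n)$ from $\mathcal{Y}_{u_{p+}}(\cdot,\gamma_*)$ that you need is immediate, since $f_1$ strictly increasing in $\gamma$ forces the gap between two ordered solutions to grow forward in $\upsilon$ (and dually for $f_2$), so your upgrade of Lemma~\ref{lemma:non_intersecting_calYu}(a) to a strict inequality is justified.

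Your closing remark about the quantifier is well taken and worth keeping: any $\gamma_0$ that happens to be a fixed point of the one-period map yields a constant (hence non-divergent) sequence, and your example $f_1(\upsilon,\gamma)=\gamma$, $f_2(\upsilon,\gamma)=-\gamma$ with $\gamma_0=0$ shows this case is not vacuous. The paper's statement and its one-line justification do not address this exceptional set either.
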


\subsection{Case Example: the Bouc-Wen  model}

We use now the propositions and corollary presented in this section to study a particular case of the Bouc-Wen hysteresis model \cite{Bouc1967,Wen1976,Ismail2009}. The Bouc-Wen model is commonly used to describe relations between displacement and restoring force as input and output in piezoactuated mechanical systems and it is defined by 
\begin{equation*}
    \dot{y}(t) = \alpha\dot{u}(t) 
                - \beta \left\lvert y(t) \right\rvert^n \dot{u}(t) 
                - \zeta y(t) \left\lvert y(t) \right\rvert^{n-1} \left\lvert \dot{u}(t) \right\rvert,
\end{equation*}
where $\alpha,\beta,\zeta\in\R$ are model parameters.
The equation above can be also written as a Duhem model of the form \eqref{eq:duhem_model} whose vector field functions $f_1$ and $f_2$ are defined by
\begin{align*}
    f_1(\upsilon,\gamma) &:=\alpha
                    - \beta \left\lvert \gamma \right\rvert^n 
                    - \zeta \gamma \left\lvert \gamma \right\rvert^{n-1}, \\ 
    f_2(\upsilon,\gamma) &:=\alpha
                    - \beta \left\lvert \gamma \right\rvert^n 
                    + \zeta \gamma \left\lvert \gamma \right\rvert^{n-1}. 
\end{align*}
Using these $f_1$ and $f_2$ into 
\eqref{eq:f1_monotonic_convergent} and \eqref{eq:f2_monotonic_convergent} of Proposition \ref{prop:sequences_bounded_convergent} we have
\begin{align*}
     \left[
     \left( \beta + \zeta \sign\left( \gamma_1 \right) \right) 
        \left\lvert \gamma_1 \right\rvert^{n}
    -\left( \beta + \zeta\sign \left( \gamma_2 \right) \right)
        \left\lvert \gamma_2 \right\rvert^{n}
    \right] 
    \left( \gamma_1 - \gamma_2 \right) &\geq 0, \\
    \left[
     \left( \beta - \zeta \sign\left( \gamma_1 \right) \right) 
        \left\lvert \gamma_1 \right\rvert^{n}
    -\left( \beta - \zeta\sign \left( \gamma_2 \right) \right)
        \left\lvert \gamma_2 \right\rvert^{n}
    \right] 
    \left( \gamma_1 - \gamma_2 \right) &\leq 0.
\end{align*}
Assuming without loss of generality that $\gamma_1>\gamma_2$, we obtain
\begin{align}
     \left[
     \left( \beta + \zeta \sign\left( \gamma_1 \right) \right) 
        \left\lvert \gamma_1 \right\rvert^{n}
    -\left( \beta + \zeta\sign \left( \gamma_2 \right) \right)
        \left\lvert \gamma_2 \right\rvert^{n}
    \right] &\geq 0, \label{eq:boucwen_convergence_condition_1}\\
    \left[
     \left( \beta - \zeta \sign\left( \gamma_1 \right) \right) 
        \left\lvert \gamma_1 \right\rvert^{n}
    -\left( \beta - \zeta\sign \left( \gamma_2 \right) \right)
        \left\lvert \gamma_2 \right\rvert^{n}
    \right] &\leq 0. \label{eq:boucwen_convergence_condition_2}
\end{align}
Note now that when $\gamma_1>\gamma_2\geq0$ or $0\geq\gamma_1>\gamma_2$, we can reduce \eqref{eq:boucwen_convergence_condition_1} and \eqref{eq:boucwen_convergence_condition_2} to
\begin{align*}
    \left( \beta + \zeta \right) \left( \left\lvert \gamma_1 \right\rvert^n - \left\lvert \gamma_2 \right\rvert^n \right) &\geq 0, \\
    \left( \beta - \zeta \right) \left( \left\lvert \gamma_1 \right\rvert^n - \left\lvert \gamma_2 \right\rvert^n \right) &\leq 0,
\end{align*}
respectively, which are trivially satisfied when
\begin{align}
    \beta+\zeta &\geq 0, \label{eq:boucwen_zetabeta_convergence_condition_1}\\
    \beta-\zeta &\leq 0. \label{eq:boucwen_zetabeta_convergence_condition_2}
\end{align}
Moreover, when $\gamma_1>0>\gamma_2$ we have
\begin{align*}
    \beta \left( \left\lvert \gamma_1 \right\rvert^n - \left\lvert \gamma_2 \right\rvert^n \right) + 
    \zeta \left( \left\lvert \gamma_1 \right\rvert^n + \left\lvert \gamma_2 \right\rvert^n \right) &\geq 0 \\
    \beta \left( \left\lvert \gamma_1 \right\rvert^n - \left\lvert \gamma_2 \right\rvert^n \right) - 
    \zeta \left( \left\lvert \gamma_1 \right\rvert^n + \left\lvert \gamma_2 \right\rvert^n \right) &\leq 0
\end{align*}
which are also satisfied for \eqref{eq:boucwen_zetabeta_convergence_condition_1} and \eqref{eq:boucwen_zetabeta_convergence_condition_2}.
%

\begin{figure}[htb]
    \centering
    \includegraphics[width=0.7\columnwidth]{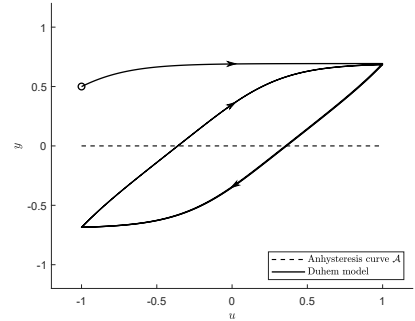}
    \caption{Hysteresis loop obtained from a Bouc-Wen hysteresis operator whose parameters $\alpha=1$, $\beta=2$, $\zeta=1$ satisfy the convergence conditions in \eqref{eq:boucwen_zetabeta_convergence_condition_1} and \eqref{eq:boucwen_zetabeta_convergence_condition_2} when a periodic input whose minimum and maximum are $\upsilon_{\min}=-1$ and $\upsilon_{\max}=1$ is applied. The initial point $(u(0),y(0))=(\upsilon_{\min},y_0)$ is marked by a circle. \label{fig:boucwen_convergent}}
\end{figure}

\begin{figure}[htb]
    \centering
    \includegraphics[width=0.7\columnwidth]{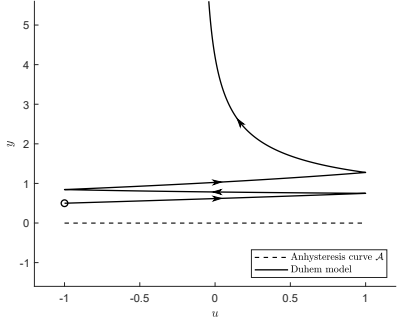}
    \caption{Divergent input-output phase plot obtained from a Bouc-Wen hysteresis operator whose parameters $\alpha=0.1$, $\beta=0.1$, $\zeta=-0.2$ satisfy the divergence conditions in
    \eqref{eq:boucwen_zetabeta_divergence_condition_1} and \eqref{eq:boucwen_zetabeta_divergence_condition_2}
    when a periodic input whose minimum and maximum are $\upsilon_{\min}=-1$ and $\upsilon_{\max}=1$ is applied. The initial point $(u(0),y(0))=(\upsilon_{\min},y_0)$ is marked by a circle. \label{fig:boucwen_divergent}}
\end{figure}

Therefore, the sequences defined by \eqref{eq:zeta_n} and \eqref{eq:gamma_n+1} are convergent for every initial value $\gamma_0\in\R$, or equivalently, the input-output phase plot of the Bouc-Wen model will converge to a periodic orbit from every initial point when conditions in \eqref{eq:boucwen_zetabeta_convergence_condition_1} and \eqref{eq:boucwen_zetabeta_convergence_condition_2} are satisfied. In fact, it can be checked that these conditions are equivalent to the ones presented in \cite[Table 1]{Ismail2009} corresponding to BIBO stable Bouc-Wen models of class I, III and V. As an illustrative example, the input-output phase plot of a Bouc-Wen model whose parameters satisfy the convergence conditions with $\alpha=1$, $\beta=1$ and $\zeta=2$ is shown in Fig. \ref{fig:boucwen_convergent}.

Conversely, based on Corollary \ref{coro:sequences_divergent}, when we have the reversed inequalities
\begin{align}
    \beta+\zeta &< 0, \label{eq:boucwen_zetabeta_divergence_condition_1}\\
    \beta-\zeta &> 0. \label{eq:boucwen_zetabeta_divergence_condition_2}
\end{align}
then the sequences defined by \eqref{eq:zeta_n} and \eqref{eq:gamma_n+1} will diverge which means that the input-output phase plot Bouc-Wen model will not exhibit a hysteresis loop. An example of this case is illustrated in Fig. \ref{fig:boucwen_divergent} with the divergent input-output phase plot of a Bouc-Wen model whose parameters are $\alpha=0.1$, $\beta=0.1$ and $\zeta=-0.2$.


%

\section{THE DUHEM BUTTERFLY MODEL}
In this section we introduce a special class of Duhem operator which we call the Duhem butterfly operators. This operator is characterized by its capability in producing complex periodic hysteresis loops with self-intersections. In this class of operators both functions $f_1$ and $f_2$ in \eqref{eq:duhem_model} can assume positive and negative values as long as they satisfy the conditions \eqref{eq:f1_strictly_monotonic_convergent} and \eqref{eq:f2_strictly_monotonic_convergent}, respectively, to guarantee the existence of a unique periodic solution.


We assume now that the implicit functions $\upsilon\mapsto\{\gamma\ |\ f_1(\upsilon,\gamma)=0\}$ and $\upsilon\mapsto\{\gamma\ |\ f_2(\upsilon,\gamma)=0\}$ admit explicit solutions
\begin{equation}\label{eq:level_set_c1_c2}
	\gamma=c_1(\upsilon)\qquad\text{and}\qquad\gamma=c_2(\upsilon),
\end{equation}
respectively, with $c_1,c_2\in AC(\R,\R)$ such that $f_1(\upsilon,c_1(\upsilon))=0$ and $f_2(\upsilon,c_2(\upsilon))=0$ for every $\upsilon\in\R$. In other words, the curves described by $c_1$ and $c_2$ are the zero level set of the functions $f_1$ and $f_2$, respectively. Note that by conditions \eqref{eq:f1_strictly_monotonic_convergent} and \eqref{eq:f2_strictly_monotonic_convergent}, each one of the curves $c_1$ and $c_2$ split the input-output plane $u-y$ into two regions such that
\begin{align*}
        f_1(\upsilon,\gamma)<0 & \text{ whenever } \gamma>c_1(\upsilon);\\
        f_1(\upsilon,\gamma)>0 & \text{ whenever } \gamma<c_1(\upsilon);\\
        f_2(\upsilon,\gamma)>0 & \text{ whenever } \gamma>c_2(\upsilon); \text{and}\\
        f_2(\upsilon,\gamma)<0 & \text{ whenever } \gamma>c_2(\upsilon).
\end{align*}

In the following, we will prove that when the functions $f_1$ and $f_2$, and the zero-level set functions $c_1$ and $c_2$ satisfy some mild assumptions, 
there is a periodic hysteresis loop with a self-intersection which gives the existence of a 
butterfly hysteresis loop. 
Prior to this, 
we need to introduce the following notations. Let $u_+,u_-\in AC(\R_+,\R)$ be inputs which are monotonically increasing and decreasing, respectively, and radially unbounded, i.e. $u_+(t)\to\infty$ and $u_-(t)\to -\infty$ as $t\to\infty$, respectively. 
Similar to \eqref{eq:integral_Yu+} and \eqref{eq:integral_Yu-}, we define the solutions of the Duhem model \eqref{eq:duhem_model} parameterized by the instantaneous value of the inputs $u_+$ and $u_-$ by $\mathcal{Y}_{u_+}$ and $\mathcal{Y}_{u_-}$, respectively. The next lemma shows that 
under mild assumptions on the functions $c_1$ and $c_2$, the positive invariance of the region below the curves $c_1$ and $c_2$ with respect to the solutions of $\mathcal{Y}_{u_+}$ and $\mathcal{Y}_{u_-}$, respectively. 

\begin{lemma}\label{lemma:sol_positive_invariance}
	If $0\leq\frac{dc_1(\upsilon)}{d\upsilon}\leq L_1$ for all $\upsilon\geq u_+(0)$ 
	then for all $\gamma_0\leq c_1(u_+(0))$,  $\mathcal{Y}_{u_+}(\upsilon,\gamma_0)\leq c_1(\upsilon)$ for all $\upsilon\geq u_+(0)$.
	
    Analogously, if $-L_2\leq\frac{dc_2(\upsilon)}{d\upsilon}\leq 0$ for all $\upsilon\leq u_-(0)$ then for all $\gamma_0\leq c_2(u_-(0))$, 
    $\mathcal{Y}_{u_-}(\upsilon,\gamma_0)\leq c_2(\upsilon)$ for all $\upsilon\leq \upsilon_0$.
\end{lemma}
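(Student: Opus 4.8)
The plan is to treat the zero-level-set curves $c_1$ and $c_2$ as one-sided barriers for the parameterized flows and run a first-crossing (comparison) argument, since $c_1$ and $c_2$ are in general \emph{not} solutions of the parameterized equations and so Lemma~\ref{lemma:non_intersecting_calYu} cannot be invoked directly. First I would record two facts. (i) By the rate-independence identity behind \eqref{eq:integral_Yu+}, $\mathcal{Y}_{u_+}(\cdot,\gamma_0)$ is absolutely continuous (in fact continuously differentiable, $f_1$ being continuous) and satisfies $\frac{d}{d\upsilon}\mathcal{Y}_{u_+}(\upsilon,\gamma_0)=f_1(\upsilon,\mathcal{Y}_{u_+}(\upsilon,\gamma_0))$ for a.e.\ $\upsilon\geq u_+(0)$; analogously, from \eqref{eq:integral_Yu-}, $\frac{d}{d\upsilon}\mathcal{Y}_{u_-}(\upsilon,\gamma_0)=f_2(\upsilon,\mathcal{Y}_{u_-}(\upsilon,\gamma_0))$ for a.e.\ $\upsilon\leq u_-(0)$. (ii) The strict monotonicity conditions \eqref{eq:f1_strictly_monotonic_convergent}--\eqref{eq:f2_strictly_monotonic_convergent}, together with $f_1(\upsilon,c_1(\upsilon))=0$ and $f_2(\upsilon,c_2(\upsilon))=0$, force $f_1(\upsilon,\gamma)<0$ strictly whenever $\gamma>c_1(\upsilon)$, and $f_2(\upsilon,\gamma)>0$ strictly whenever $\gamma>c_2(\upsilon)$.

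For the first statement, I would set $g(\upsilon):=\mathcal{Y}_{u_+}(\upsilon,\gamma_0)-c_1(\upsilon)$, note $g$ is absolutely continuous with $g(u_+(0))=\gamma_0-c_1(u_+(0))\leq 0$, and argue by contradiction: if $g(\upsilon_1)>0$ for some $\upsilon_1>u_+(0)$, let $\upsilon^{\ast}:=\sup\{\upsilon\in[u_+(0),\upsilon_1]\ :\ g(\upsilon)\leq 0\}$. Then $g(\upsilon^{\ast})=0$ and $g(\upsilon)>0$ for all $\upsilon\in(\upsilon^{\ast},\upsilon_1]$. On that interval $\mathcal{Y}_{u_+}(\upsilon,\gamma_0)>c_1(\upsilon)$, hence $f_1(\upsilon,\mathcal{Y}_{u_+}(\upsilon,\gamma_0))<0$, while $\frac{dc_1}{d\upsilon}(\upsilon)\geq 0$; therefore $g'(\upsilon)=f_1(\upsilon,\mathcal{Y}_{u_+}(\upsilon,\gamma_0))-\frac{dc_1}{d\upsilon}(\upsilon)<0$ for a.e.\ $\upsilon\in(\upsilon^{\ast},\upsilon_1)$, and integrating gives $g(\upsilon_1)=g(\upsilon^{\ast})+\int_{\upsilon^{\ast}}^{\upsilon_1}g'(s)\,\dd s<0$, contradicting $g(\upsilon_1)>0$. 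Only $\frac{dc_1}{d\upsilon}\geq 0$ is used here; the bound $L_1$ is not needed for this invariance (it will enter the later analysis, e.g.\ in locating or bounding the periodic orbit).

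The second statement is symmetric, the only twist being that the relevant monotonicity of $c_2$ is read in the direction of \emph{decreasing} $\upsilon$ as time runs forward from $u_-(0)$. I would set $h(\upsilon):=\mathcal{Y}_{u_-}(\upsilon,\gamma_0)-c_2(\upsilon)$ for $\upsilon\leq u_-(0)$, absolutely continuous with $h(u_-(0))=\gamma_0-c_2(u_-(0))\leq 0$. If $h(\bar\upsilon)>0$ at some $\bar\upsilon<u_-(0)$, let $\upsilon^{\ast}:=\inf\{\upsilon\geq\bar\upsilon\ :\ h(\upsilon)\leq 0\}$, which is nonempty since $h(u_-(0))\leq 0$; then $h(\upsilon^{\ast})=0$ and $h>0$ on $[\bar\upsilon,\upsilon^{\ast})$. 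There $\mathcal{Y}_{u_-}(\upsilon,\gamma_0)>c_2(\upsilon)$ gives $f_2(\upsilon,\mathcal{Y}_{u_-}(\upsilon,\gamma_0))>0$, and with $\frac{dc_2}{d\upsilon}(\upsilon)\leq 0$ one gets $h'(\upsilon)=f_2(\upsilon,\mathcal{Y}_{u_-}(\upsilon,\gamma_0))-\frac{dc_2}{d\upsilon}(\upsilon)>0$ for a.e.\ $\upsilon\in(\bar\upsilon,\upsilon^{\ast})$, whence $0=h(\upsilon^{\ast})=h(\bar\upsilon)+\int_{\bar\upsilon}^{\upsilon^{\ast}}h'(s)\,\dd s>h(\bar\upsilon)>0$, a contradiction. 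Hence $\mathcal{Y}_{u_-}(\upsilon,\gamma_0)\leq c_2(\upsilon)$ for all $\upsilon\leq u_-(0)$.

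I expect the only delicate point to be conceptual rather than computational: one must recognize that the monotonicity hypothesis turns $c_1$ (resp.\ $c_2$, read in reversed $\upsilon$-direction) into a supersolution that trajectories cannot cross from below, since exceeding the curve would require $f_1<0$ (resp.\ $f_2>0$) to pull the solution back, which is exactly what the sign structure in (ii) provides. Carrying this out with only absolute-continuity regularity for $c_1,c_2$ is the reason for phrasing the estimate through the integral of $g'$ (resp.\ $h'$) over the whole crossing interval rather than through an instantaneous slope comparison at the crossing point, which would be inconclusive whenever $\frac{dc_1}{d\upsilon}$ happens to vanish there.
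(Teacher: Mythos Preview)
Your argument is correct and takes a genuinely different route from the paper. The paper frames the claim as positive invariance of the sublevel set $\mathcal{C}_{1+}=\{(\upsilon,\gamma):\gamma\leq c_1(\upsilon)\}$ and invokes Nagumo's theorem: it computes the tangent vector $(1,f_1(\upsilon_0,c_1(\upsilon_0)))=(1,0)$ to the flow at a boundary point, checks that it lies in the Bouligand tangent cone to $\mathcal{C}_{1+}$ (using $c_1$ nondecreasing so that the horizontal displacement $(\upsilon_0+h,c_1(\upsilon_0))$ stays in the set), and concludes invariance; the $c_2$ case is symmetric. Your approach is the elementary first-crossing/supersolution comparison: define $g=\mathcal{Y}_{u_+}-c_1$, locate the last nonpositive point, and integrate $g'=f_1-\frac{dc_1}{d\upsilon}<0$ over the putative excursion interval to reach a contradiction.

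What each buys: the Nagumo route packages the geometry cleanly and would extend verbatim to vector-valued settings or more complicated invariant regions, but imports an external theorem and somewhat obscures which hypotheses are actually doing the work. Your barrier argument is self-contained, works at the $AC$ regularity level without further comment, and makes transparent (as you note) that only the sign condition $\frac{dc_1}{d\upsilon}\geq 0$ is used here while the upper bound $L_1$ is reserved for Lemma~\ref{lemma:sol_extension_outbounded}. Both proofs ultimately rest on the same two ingredients---$f_1(\upsilon,c_1(\upsilon))=0$ and $c_1$ nondecreasing---so the difference is one of machinery rather than substance.
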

\begin{proof}
    \begin{figure}
        \centering
        \begin{subfigure}{0.45\linewidth}
            \centering
            \includegraphics[width=1.0\linewidth]{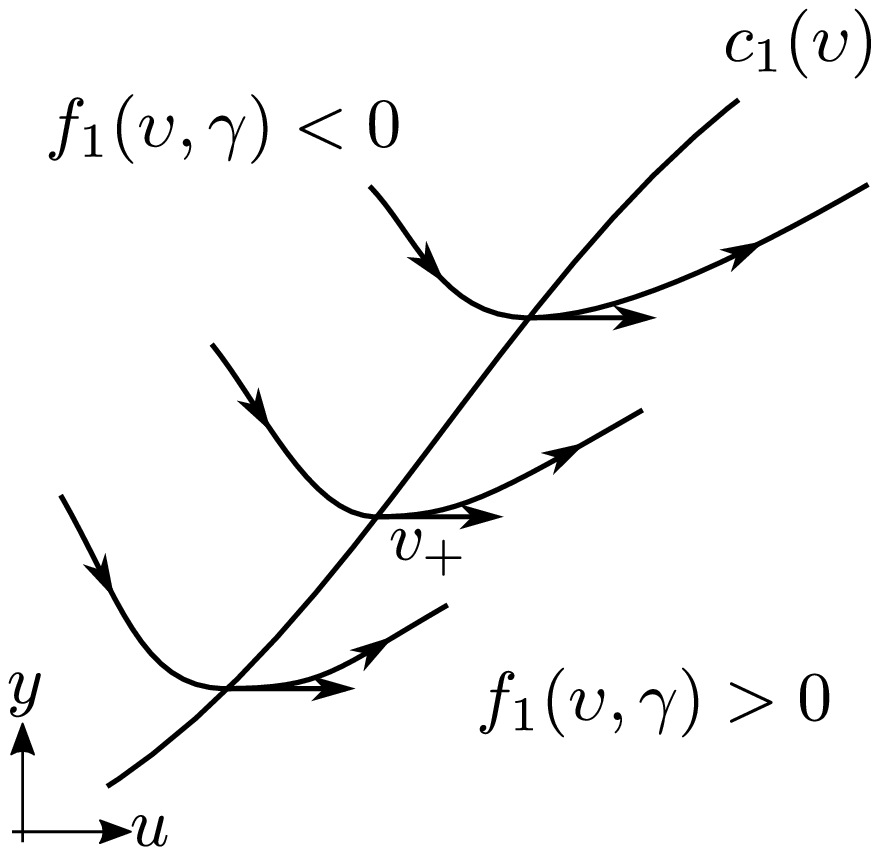}
            \caption{With a increasing input $u_+$. \label{fig:invariant_+}}
        \end{subfigure}%
        \hspace*{5mm}%
        \begin{subfigure}{0.45\linewidth}
            \centering
            \includegraphics[width=1.0\linewidth]{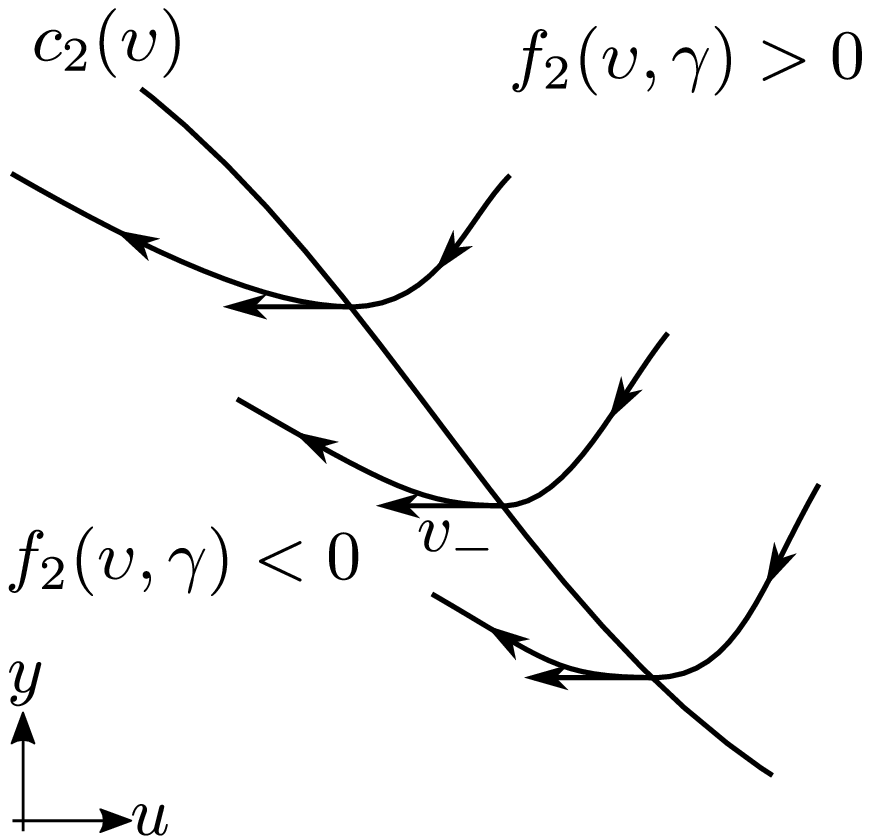}
            \caption{With a decreasing input $u_-$. \label{fig:invariant_-}}
        \end{subfigure}
        \caption{Invariance of the solutions for an increasing and decreasing input respect to the parameterizations $c_1(\upsilon)$ and $c_2(\upsilon)$ of the level sets $f_1(\upsilon,\gamma)=0$ and $f_2(\upsilon,\gamma)=0$, respectively.}
    \end{figure}
    We prove now the first claim of the lemma. Let us define the domain under the curve $c_1$ as follows 
    \begin{equation}\label{eq:C_1+}
        \mathcal{C}_{1+}:=\left\{ (\upsilon,\gamma)\in\R^2\ |\ \gamma\leq c_1(\upsilon) \right\}.
    \end{equation}
It can be checked that $\mathcal{C}_{1+}$ is positively invariant with respect to the solutions of Duhem model \eqref{eq:duhem_model} with monotonically increasing input $u_+$ and with initial conditions in $\mathcal{C}_{1+}$. Indeed, for every point $x\in\mathcal{C}_{1+}$ we can construct the tangent cone to this set as defined in \cite[Def. 3.1]{Blanchini1999}, which is given by
    \begin{equation*}
        \mathcal{T}_{\mathcal{C}_{1+}}(x) = \left\{ z\in\R^2\colon \liminf_{h\to 0} \frac{ \text{dist}\left(x + hz, \mathcal{C}_{1+} \right) }{ h } = 0 \right\},
    \end{equation*}
    where we take $\text{dist}(\cdot)$ to be the Euclidian distance from $x$ to the closest point $y\in\mathcal{C}_{1+}$.
    Let $\nu_{+}(\upsilon_0)\in\R^2$ be the tangent vector to the solution $\mathcal{Y}_{u_+}$ which is given by
    \begin{equation*}
        \begin{aligned}
        \nu_{+}(\upsilon_0) &= \left(1,\left.\frac{\dd}{\dd \upsilon}\right|_{\upsilon=\upsilon_0} \mathcal{Y}_{u_+}\big( \upsilon, c_1(\upsilon_0) \big) \right) \\
        &= \Big(1,f_1\big(\upsilon_0,c_1(\upsilon_0)\big)\Big).
        \end{aligned}
    \end{equation*}
    Now, we show that $\nu_{+}(\upsilon_0)\in\mathcal{T}_{\mathcal{C}_{1+}}(x_{1})$ with $x_{1}=(\upsilon_0,c_1(\upsilon_0))\in\mathcal{C}_{1+}$ for every $\upsilon_0\in\R$ so that the solutions of $\mathcal Y_{u_+}$ do not escape $\mathcal C_{1+}$ on the boundary (see Fig. \ref{fig:invariant_+}). In other words, we show that the tangent vector to the solution $\mathcal{Y}_{u_+}$ belongs to the tangent cone to the set $\mathcal{C}_{1+}$ at every point of the boundary. For this let us consider a point $w=(\upsilon_0+h,c_1(\upsilon_0))$ and note that since $c_1(\upsilon)$ is monotonically increasing we have $w\in \mathcal{C}_{1+}$ for every $h>0$. Then we can check
    \begin{equation*}
        \begin{aligned}
             &\liminf_{h\to 0^+} \frac{ \lVert \big(x_{1} + h\nu_{+}(\upsilon_0)\big) - w \rVert}{ h }\\
             &\quad=\liminf_{h\to 0^+} \frac{   h \left|f_1\big(\upsilon_0,c_1(\upsilon_0)\big)\right|  
             }{ h }
             =\left|f_1\big(\upsilon_0,c_1(\upsilon_0)\big)\right| = 0,
        \end{aligned}
    \end{equation*}
    which proves that $\nu_{+}(\upsilon_0)\in\mathcal{T}_{\mathcal{C}_{2-}}(x_{1})$. Consequently, following from Nagumo theorem \cite[Th. 3.1]{Blanchini1999} the set $\mathcal{C}_{1+}$ is positively invariant and $\mathcal{Y}_{u_+}(\upsilon,\gamma_0)\leq c_1(\upsilon)$ for every $\upsilon\geq \upsilon_0$.\\

    \noindent For proving the second claim of the lemma, 
    we consider the set
    \begin{equation}\label{eq:C_2-}
        \mathcal{C}_{2-}:=\left\{ (\upsilon,\gamma)\in\R^2\ |\ \gamma\leq c_2(\upsilon) \right\},
    \end{equation}
    which consists of all the points below the curve parameterized by $\gamma=c_2(\upsilon)$. 
    We let $\nu_{-}(\upsilon_0)\in\R^2$ be the tangent vector to the solution $\mathcal{Y}_{u_-}$ given by
    \begin{equation*}
        \begin{aligned}
            \nu_{-}(\upsilon_0) &= \left(-1,\left.\frac{\dd}{\dd \upsilon}\right|_{u=\upsilon_0} \mathcal{Y}_{u_-}\big( \upsilon, c_2(\upsilon_0) \big) \right)\\
            &=\Big(-1,f_2\big(\upsilon_0,c_2(\upsilon_0)\big)\Big).
        \end{aligned}
    \end{equation*}
    In this case, we show that the tangent vector $\nu_{-}(\upsilon_0)$ to the solution $\mathcal{Y}_{u_-}$ belongs to the tangent cone to the set $\mathcal{C}_{2-}$ at every point of the boundary (see Fig. \ref{fig:invariant_-}). We consider in this case a point $w=(\upsilon_0-h,c_2(\upsilon_0))$ and note that since $c_2(\upsilon)$ is monotonically decreasing we have $w\in \mathcal{C}_{2-}$ for every $h>0$. Then we can check analogously that
    \begin{equation*}
    \begin{aligned}
    &\liminf_{h\to 0^+} \frac{ \lVert \big(x_{2} + h\nu_{-}(\upsilon_0)\big) - w \rVert}{ h }\\
    &\quad=\liminf_{h\to 0^+} \frac{ h \lvert f_2\big(\upsilon_0,c_2(\upsilon_0) \rvert }{ h }
    = \lvert f_2\big(\upsilon_0,c_2(\upsilon_0)\big) \rvert = 0,
    \end{aligned}
    \end{equation*}
    which proves that $\nu_{-}(\upsilon_0)\in\mathcal{T}_{\mathcal{C}_{2-}}(x_{2})$ and following again from Nagumo theorem the set $\mathcal{C}_{2-}$ is positively invariant and $\mathcal{Y}_{u_-}(\upsilon,\gamma_0)\leq c_2(\upsilon)$ for every $\upsilon\leq \upsilon_0$.
\end{proof}\vspace*{1mm}

We remark that Lemma \ref{lemma:sol_positive_invariance} proves invariance of the solutions only for the case when the slopes of the level set functions $c_1$ and $c_2$ in \eqref{eq:level_set_c1_c2} are positive and negative, respectively. Nevertheless, it is also possible to prove invariance for the opposite case corresponding to the level set functions $c_1$ and $c_2$ having negative and positive slopes, respectively. In this opposite case, the invariant set for $\mathcal{Y}_{u_+}$ and $\mathcal{Y}_{u_-}$ correspond to the closure of the complement of $\mathcal C_{1+}$ in \eqref{eq:C_1+} and $\mathcal C_{2-}$ in \eqref{eq:C_2-}, respectively. 

In the next lemma we prove that under mild assumptions regarding the monotonicity in the first argument of the functions $f_1$ and $f_2$, the extended solutions $\mathcal{Y}_{u_+}$ and $\mathcal{Y}_{u_-}$ in the reverse direction (when the input signal $u_+$ and $u_-$ as defined before Lemma \ref{lemma:sol_positive_invariance} are extended from $\rline_+$ to the whole real $\rline$) intersect with the zero level set curve $c_2$ and $c_1$, respectively.

\begin{lemma}\label{lemma:sol_extension_outbounded}
	Assume that the hypotheses in Lemma \ref{lemma:sol_positive_invariance} hold. Suppose that 
	$f_1$ satisfy
	\begin{align}
		\left( f_1(\upsilon_1,\gamma) - f_1(\upsilon_2,\gamma) \right)\left( \upsilon_1 - \upsilon_2 \right) &< 0, \label{eq:f1_upsilon_strictly_monotonic}
	\end{align}
	for every $\upsilon_1,\upsilon_2,\gamma\in\R$ and 
    let $\upsilon_a,\gamma_a\in\R$ be such that $\gamma_a=c_1(\upsilon_a)<c_2(\upsilon_a)$. 
    Then there exists $\upsilon_b<\upsilon_a$ such that $\mathcal{Y}_{u_+}(\upsilon_b,\gamma_a)=c_2(\upsilon_b)$.
    
    Analogously, suppose that $f_2$ satisfy
    \begin{align}
    	\left( f_2(\upsilon_1,\gamma) - f_2(\upsilon_2,\gamma) \right)\left( \upsilon_1 - \upsilon_2 \right) &> 0, \label{eq:f2_upsilon_strictly_monotonic}
    \end{align}
	for every $\upsilon_1,\upsilon_2,\gamma\in\R$ and 
    let $\upsilon_a,\gamma_a\in\R$ be such that $\gamma_a=c_1(\upsilon_a)>c_2(\upsilon_a)$.
    Then there exists  $\upsilon_b>\upsilon_a$ such that $\mathcal{Y}_{u_-}(\upsilon_b,\gamma_a)=c_1(\upsilon_b)$.
\end{lemma}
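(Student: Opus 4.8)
The plan is to recast the claim as a statement about a scalar ODE in the $\upsilon$-variable and then argue by contradiction. After extending the monotone input $u_+$ to all of $\R$, the map $\upsilon\mapsto y(\upsilon):=\mathcal Y_{u_+}(\upsilon,\gamma_a)$ is the unique solution of $y'(\upsilon)=f_1(\upsilon,y(\upsilon))$ passing through $(\upsilon_a,\gamma_a)$ with $\gamma_a=c_1(\upsilon_a)$, and the goal is to produce $\upsilon_b<\upsilon_a$ with $y(\upsilon_b)=c_2(\upsilon_b)$. Suppose no such $\upsilon_b$ exists; since $y(\upsilon_a)=c_1(\upsilon_a)<c_2(\upsilon_a)$, continuity of $y$ and $c_2$ then forces $y(\upsilon)<c_2(\upsilon)$ for every $\upsilon\le\upsilon_a$.

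The first step is to pin the trajectory down relative to $c_1$. Because $f_1(\upsilon_a,\gamma_a)=0$, condition \eqref{eq:f1_upsilon_strictly_monotonic} puts $f_1(\upsilon,\gamma_a)$ strictly on one fixed side of $0$ for $\upsilon<\upsilon_a$, and \eqref{eq:f1_strictly_monotonic_convergent} then determines on which side of $c_1$ the solution lies immediately to the left of $\upsilon_a$; the same two conditions also make $c_1$ strictly monotone in $\upsilon$. Using the Nagumo-type invariance established in Lemma \ref{lemma:sol_positive_invariance} (in the slope configuration at hand) together with uniqueness of solutions — so that the open complement of the closed forward-invariant region is backward invariant — the curve $y$ cannot re-cross $c_1$ for $\upsilon<\upsilon_a$. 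Hence $f_1(\upsilon,y(\upsilon))$ keeps a constant sign on $(-\infty,\upsilon_a)$, so $\upsilon\mapsto y(\upsilon)$ is strictly monotone there, and, combined with the contradiction hypothesis, it is trapped in the strip bounded by $c_1$ and $c_2$.

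To close the argument I would fix a reference abscissa $\upsilon^{*}<\upsilon_a$ and compare $y$ with the solution $w$ of the frozen equation $w'(\upsilon)=f_1(\upsilon^{*},w(\upsilon))$, $w(\upsilon^{*})=y(\upsilon^{*})$: for $\upsilon<\upsilon^{*}$, \eqref{eq:f1_upsilon_strictly_monotonic} places $f_1(\upsilon,\cdot)$ strictly on one side of $f_1(\upsilon^{*},\cdot)$, and since $\gamma\mapsto -f_1(\upsilon^{*},\gamma)$ is strictly monotone by \eqref{eq:f1_strictly_monotonic_convergent}, a scalar comparison theorem run in the direction of decreasing $\upsilon$ bounds $y$ by $w$ on the appropriate side. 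Because $w(\upsilon^{*})=y(\upsilon^{*})$ lies strictly off the zero set $c_1(\upsilon^{*})$ of the frozen field, $-f_1(\upsilon^{*},w)$ stays bounded away from $0$ along the $w$-trajectory, so $w(\upsilon)$ grows without bound as $\upsilon\to-\infty$, whereas $c_2$ can drift at most at the rate $L_2$ by the one-sided Lipschitz hypothesis of Lemma \ref{lemma:sol_positive_invariance}. Hence $y(\upsilon)-c_2(\upsilon)$ cannot remain of one sign, contradicting the first paragraph and producing $\upsilon_b$. The second assertion follows identically after the substitutions $f_1\leftrightarrow f_2$, $u_+\leftrightarrow u_-$, $c_1\leftrightarrow c_2$, tracking the solution in the direction of increasing $\upsilon$ and invoking \eqref{eq:f2_upsilon_strictly_monotonic} and \eqref{eq:f2_strictly_monotonic_convergent}.

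The step I expect to be the main obstacle is the last one: the strict monotonicity assumptions \eqref{eq:f1_upsilon_strictly_monotonic}--\eqref{eq:f2_upsilon_strictly_monotonic} carry no modulus, so one has to extract a genuine quantitative lower bound on $|f_1(\upsilon,y(\upsilon))|$ along the trapped trajectory — from the comparison at the fixed abscissa $\upsilon^{*}$ together with the fact that $y$ stays a definite distance inside the strip — and play it against the linear $L_2$-bound on the motion of $c_2$, so as to rule out the possibility that $y$ merely asymptotes to $c_2$ without ever meeting it. A minor additional technicality is the degenerate sub-case in which $f_1$ vanishes to higher order along $c_1$ at $\upsilon_a$, where determining the side of $c_1$ on which $y$ departs requires a higher-order Taylor expansion of $\upsilon\mapsto y(\upsilon)-c_1(\upsilon)$ at $\upsilon_a$.
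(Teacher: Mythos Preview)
Your overall strategy matches the paper's: extend the trajectory $y(\upsilon)=\mathcal Y_{u_+}(\upsilon,\gamma_a)$ to $\upsilon<\upsilon_a$, show it stays above $c_1$, and then argue that it climbs faster than $c_2$ can (the latter being controlled by the slope bound $L_2$), forcing an intersection. The two proofs differ mainly in how they execute the last two steps.

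For ``$y$ stays above $c_1$'' you invoke backward invariance via uniqueness and Lemma~\ref{lemma:sol_positive_invariance}; the paper instead writes the integral form
\[
y(\upsilon)=\gamma_a-\int_{\upsilon}^{\upsilon_a} f_1(\sigma,y(\sigma))\,\dd\sigma
\]
and observes that the integrand is nonpositive once $y>c_1$, so $y(\upsilon)\ge\gamma_a=c_1(\upsilon_a)\ge c_1(\upsilon)$ (using $c_1$ nondecreasing). This integral argument is shorter and also disposes of your ``degenerate sub-case'' at $\upsilon_a$: there is no need for a Taylor expansion, because the inequality $y(\upsilon)\ge\gamma_a$ is global and does not depend on the order of vanishing of $f_1$ along $c_1$.

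For the growth step you freeze the first argument at $\upsilon^{*}$ and invoke a comparison with $w'=f_1(\upsilon^{*},w)$. The paper instead argues that $\upsilon\mapsto f_1(\upsilon,y(\upsilon))$ is monotone along the backward trajectory (both arguments push $f_1$ in the same direction), so by the intermediate value theorem there is a point $\upsilon_{L_2}\le\upsilon_a$ where $f_1(\upsilon_{L_2},y(\upsilon_{L_2}))=-L_2$, and beyond it $f_1<-L_2$; from there a direct integral comparison of $y-c_2$ finishes. Your frozen-field comparison and the paper's ``$f_1<-L_2$ eventually'' are aimed at exactly the same conclusion, and the obstacle you flag --- that strict monotonicity without a modulus does not by itself guarantee $|f_1|$ exceeds $L_2$ --- is precisely the step the paper handles by asserting the existence of $\upsilon_{L_2}$. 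In that sense your proposal is not missing anything the paper supplies; you have simply been more explicit about where the quantitative input is needed.
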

\begin{proof}
    \begin{figure}
        \centering
        \begin{subfigure}{0.48\linewidth}
            \centering
            \includegraphics[width=1.0\linewidth]{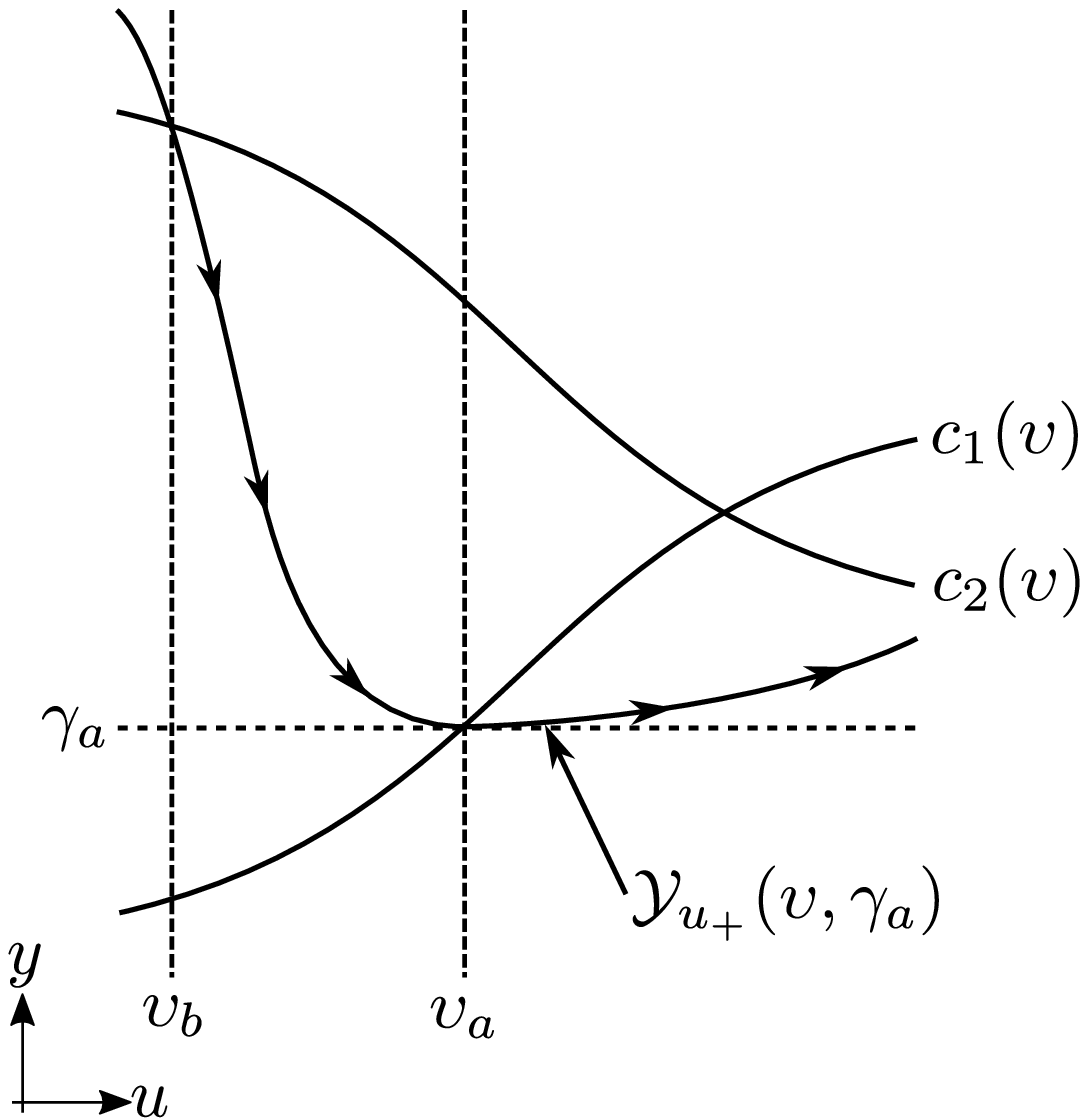}
            \caption{Intersection of the level set curve $\gamma=c_1(\upsilon)$ and the solution $\mathcal{Y}_{u_-}(\upsilon,\gamma_a)$. \label{fig:outbounded_y+}}
        \end{subfigure}%
        \hspace*{2mm}%
        \begin{subfigure}{0.48\linewidth}
            \centering
            \includegraphics[width=1.0\linewidth]{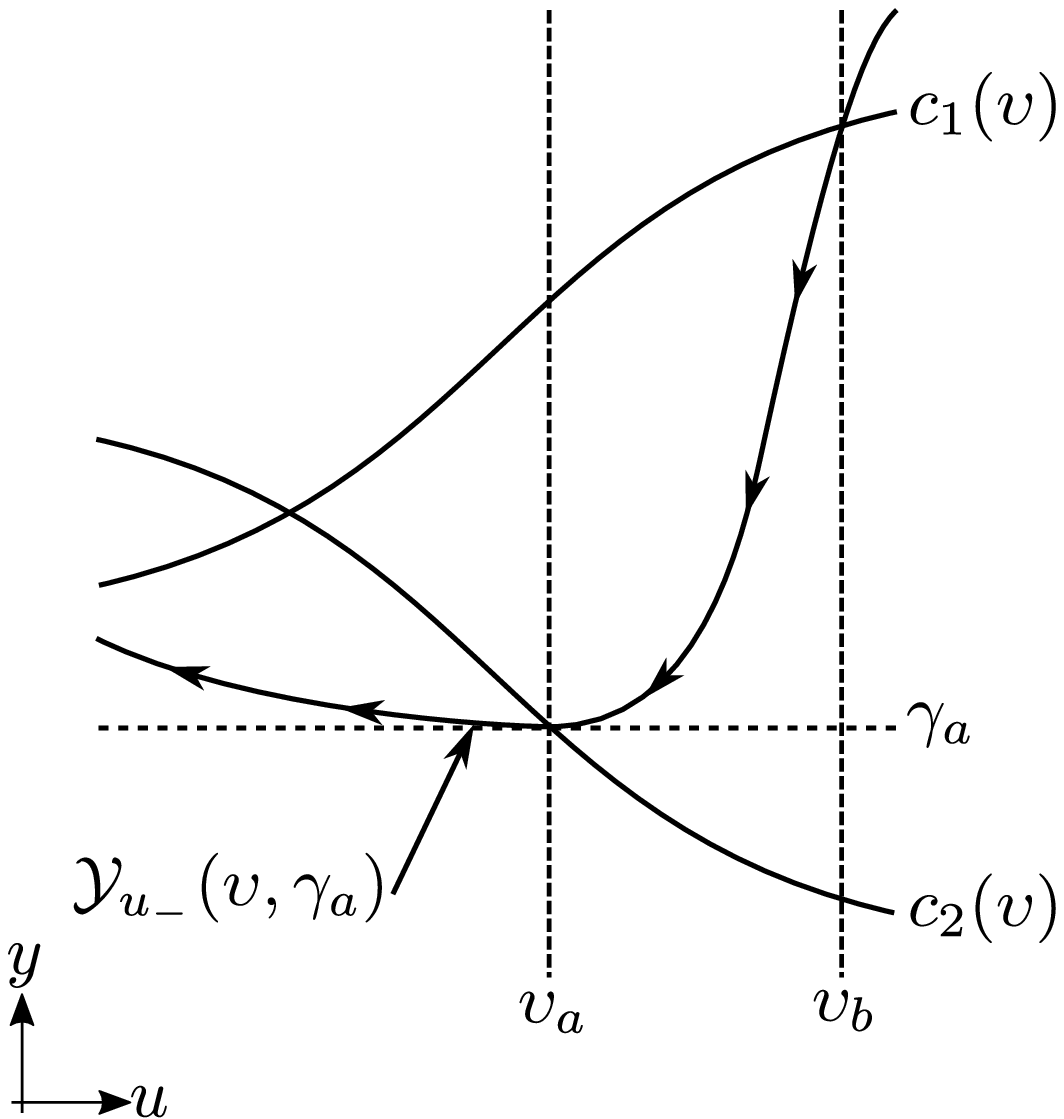}
            \caption{Intersection of the level set curve $\gamma=c_2(\upsilon)$ and the solution $\mathcal{Y}_{u_+}(\upsilon,\gamma_a)$. \label{fig:outbounded_y-}}
        \end{subfigure}
        \caption{Intersection of the solutions for an increasing and decreasing input with the parameterizations $c_1(\upsilon)$ and $c_2(\upsilon)$ of the level sets $f_1(\upsilon,\gamma)=0$ and $f_2(\upsilon,\gamma)=0$, respectively.}
    \end{figure}   
    Let us firstly prove the existence of a point $\upsilon_b$ where the curve $\mathcal Y_{u+}(\cdot,\gamma_a)$ intersects with $c_2$ at $\upsilon_b$. By extending $u_+$ from $\rline_+$ to $\rline$ while still satisfying the monotonicity and radial unbounded assumption of $u_+$ (e.g.,
    $\displaystyle{\lim_{t\to -\infty}}u_+(t)=-\infty$ 
    and $\displaystyle{\lim_{t\to + \infty}}u_+(t)=\infty$), 
    the solution $\mathcal{Y}_{u_+}(\upsilon,\gamma_a)$ can be extended in the negative direction (i.e. $\upsilon<\upsilon_a$) and the equation
    \begin{equation*}
        \begin{aligned}
            \mathcal{Y}_{u_+}(\upsilon,\gamma_a) &=\hphantom{-}\int_{\upsilon_a}^{\upsilon} f_1\left(\upsilon, \mathcal{Y}_{u_+}(\upsilon,\gamma_a) \right)\ \dd\upsilon + \gamma_a\\
            &=-\int_{\upsilon}^{\upsilon_a} f_1\left(\upsilon, \mathcal{Y}_{u_+}(\upsilon,\gamma_a) \right)\ \dd\upsilon + \gamma_a,
        \end{aligned}
    \end{equation*}
    is still valid (see Fig. \ref{fig:outbounded_y+}). 
    Moreover, since $f_1(\upsilon,\gamma)<0$ whenever $\gamma> c_1(\upsilon)$ we have that
    \begin{equation*}
        \begin{aligned}
            \mathcal{Y}_{u_+}(\upsilon,\gamma_a) &= \left| \int_{\upsilon}^{\upsilon_a} f_1\left(\upsilon, \mathcal{Y}_{u_+}(\upsilon,\gamma_a) \right)\ \dd\upsilon \right| + \gamma_a,
        \end{aligned}
    \end{equation*}
    for every $\upsilon<\upsilon_a$, which means that the extension of the solution $\mathcal{Y}_{u_+}(\upsilon,\gamma_a)$ in the negative direction remains above the curve parameterized by $\gamma=c_1(\upsilon)$.
    By the assumption  \eqref{eq:f1_upsilon_strictly_monotonic} and using the bound $L_2$ of $\frac{dc_2(\upsilon)}{d\upsilon}$ as in the hypotheses of Lemma \ref{lemma:sol_positive_invariance}, we have that there exists $\upsilon_{L_2}\leq\upsilon_a$ such that for every $\upsilon<\upsilon_{L_2}$ we have
    \begin{equation*}
        f_1\left(\upsilon, \mathcal{Y}_{u_+}(\upsilon,\gamma_a) \right) 
        < f_1\left(\upsilon_{L_2}, \mathcal{Y}_{u_+}(\upsilon_{L_2},\gamma_a) \right) 
        = -L_2.
    \end{equation*}
    Since we have that 
    \begin{equation*}
        \begin{aligned}
            c_2(\upsilon) &= \int_{\upsilon_a}^{\upsilon} \frac{\dd c_2}{\dd\upsilon}\dd\upsilon + c_2(\upsilon_a)\\
            &= -\int_{\upsilon}^{\upsilon_a} \frac{\dd c_2}{\dd\upsilon}\dd\upsilon + c_2(\upsilon_a) 
            \leq L_2(\upsilon_a-\upsilon) + c_2(\upsilon_a),
        \end{aligned}
    \end{equation*}
    the solution $\mathcal{Y}_{u_+}(\upsilon,\gamma_a)$ and the curve parameterized by $\gamma=c_2(\upsilon)$ intersect each other at some $\upsilon_b<\upsilon_{L_2}$. Indeed, this can be observed from the fact that
    \begin{equation*}
        \begin{aligned}
            &\mathcal{Y}_{u_+}(\upsilon,\gamma_a)-c_2(\upsilon)=\gamma_a-c_2(\upsilon_a)\\
            &\qquad\qquad+\int_{\upsilon_a}^{\upsilon} \left\{ f_1\left(\upsilon, \mathcal{Y}_{u_+}(\upsilon,\gamma_a) \right) - \frac{\dd c_2}{\dd\upsilon} \right\}\ \dd\upsilon \\
            &\qquad=
            	\underbrace{\int_{\upsilon_a}^{\upsilon_{L_2}} \left\{ f_1\left(\upsilon, \mathcal{Y}_{u_+}(\upsilon,\gamma_a) \right) - \frac{\dd c_2}{\dd\upsilon} \right\}\ \dd\upsilon}_{<0} \\
            &\qquad\qquad+
                \underbrace{\int_{\upsilon_{L_2}}^{\upsilon} \left\{ f_1\left(\upsilon, \mathcal{Y}_{u_+}(\upsilon,\gamma_a) \right) - \frac{\dd c_2}{\dd\upsilon}, \right\}\ \dd\upsilon}_{\geq 0}
                \\
        \end{aligned}
    \end{equation*}
    where the last term grows radially unbounded for $\upsilon<\upsilon_{L_2}$.\\
    
    We can prove analogously the second claim of the lemma as illustrated in Fig. \ref{fig:outbounded_y-}. Similar as before, the solution $\mathcal{Y}_{u_-}(\upsilon,\gamma_a)$ can be extended in the positive direction (i.e. $\upsilon>\upsilon_a$) when $u_-$ is extended from $\rline_+$ to $\rline$  satisfying the monotonicity and radial unbounded assumption of $u_-$. In this case, 
    \begin{equation*}
            \mathcal{Y}_{u_-}(\upsilon,\gamma_a) 
            = \int_{\upsilon_a}^{\upsilon} f_2\left(\upsilon, \mathcal{Y}_{u_-}(\upsilon,\gamma_a) \right)\ \dd\upsilon + \gamma_a
    \end{equation*}
    and since $f_2(\upsilon,\gamma)>0$ whenever $\gamma>c_2(\upsilon)$,  we have that
    \begin{equation*}
        \begin{aligned}
            \mathcal{Y}_{u_-}(\upsilon,\gamma_a) &= \left| \int_{\upsilon}^{\upsilon_a} f_2\left(\upsilon, \mathcal{Y}_{u_-}(\upsilon,\gamma_a) \right)\ \dd\upsilon \right| + \gamma_a,
        \end{aligned}
    \end{equation*}
    for every $\upsilon>\upsilon_a$, which means that the extension of the solution $\mathcal{Y}_{u_-}(\upsilon,\gamma_a)$ in the positive direction remains above the curve parameterized by $\gamma=c_2(\upsilon)$.
    In this case, using \eqref{eq:f2_upsilon_strictly_monotonic} and the bound $L_1$ of $\frac{dc_1(\upsilon)}{d\upsilon}$, we have that there exists $\upsilon_{L_1}\geq\upsilon_a$ such that for every $\upsilon>\upsilon_{L_1}$ we have
    \begin{equation*}
        f_2\left(\upsilon, \mathcal{Y}_{u_-}(\upsilon,\gamma_a) \right) 
        > f_2\left(\upsilon_{L_1}, \mathcal{Y}_{u_-}(\upsilon_{L_1},\gamma_a) \right) 
        = L_1.
    \end{equation*}
    Since we have that  
    \begin{equation*}
        \begin{aligned}
            c_1(\upsilon) = \int_{\upsilon_a}^{\upsilon} \frac{\dd c_1}{\dd\upsilon}\dd\upsilon + c_1(\upsilon_a)
            \leq L_1(\upsilon-\upsilon_a) + c_1(\upsilon_a),
        \end{aligned}
    \end{equation*}
    the solution $\mathcal{Y}_{u_-}(\upsilon,\gamma_a)$ and the curve parameterized by $\gamma=c_1(\upsilon)$ intersect each other at some $\upsilon_b>\upsilon_{L_1}$. It follows from the fact that 
    \begin{equation*}
        \begin{aligned}
            &\mathcal{Y}_{u_-}(\upsilon,\gamma_a)-c_1(\upsilon)=\gamma_a-c_1(\upsilon_a)\\
            &\qquad\qquad+\int_{\upsilon_a}^{\upsilon} \left\{ f_2\left(\upsilon, \mathcal{Y}_{u_-}(\upsilon,\gamma_a) \right) - \frac{\dd c_1}{\dd\upsilon} \right\}\ \dd\upsilon \\
            &\qquad=\underbrace{\int_{\upsilon_a}^{\upsilon_{L_1}} \left\{ f_2\left(\upsilon, \mathcal{Y}_{u_-}(\upsilon,\gamma_a) \right) - \frac{\dd c_1}{\dd\upsilon} \right\}\ \dd\upsilon}_{<0} \\
            &\qquad\qquad+
                \underbrace{\int_{\upsilon_{L_1}}^{\upsilon} \left\{ f_2\left(\upsilon, \mathcal{Y}_{u_-}(\upsilon,\gamma_a) \right) - \frac{\dd c_1}{\dd\upsilon}, \right\}\ \dd\upsilon}_{\geq 0}
                \\
        \end{aligned}
    \end{equation*}
    where the last term grows radially unbounded for $\upsilon>\upsilon_{L_1}$.
\end{proof}

As with Lemma \ref{lemma:sol_positive_invariance}, we also remark that Lemma \ref{lemma:sol_extension_outbounded} proves that the extension of the solutions in the negative direction of their corresponding input intersect with the zero level set functions $c_1$ and $c_2$ only for the case when their slopes are positive and negative, respectively. However, vis-a-vis arguments can prove the opposite case when the extended solutions in the negative direction of their corresponding input intersect with the level set functions $c_1$ and $c_2$ have negative and positive slopes, respectively.

In the following proposition we present the main result of this section, where we prove constructively the existence of outputs $\mathcal{Y}_{u_+}$ and $\mathcal{Y}_{u_-}$ with intersections.\\

\begin{proposition}\label{prop:duhem_butterfly}
	Assume that the hypotheses in Lemma \ref{lemma:sol_extension_outbounded} are satisfied (which include those in Lemmas \ref{lemma:sol_positive_invariance}). Let $\upsilon_f\in\R$ be such that $c_1(\upsilon_f)=c_2(\upsilon_f)$. 
    Then for every $\upsilon_{a_+}<\upsilon_f$ there exist $\upsilon_{\min},\upsilon_x,\upsilon_{a_-},\upsilon_{\max}\in\R$ such that $\upsilon_{\min}<\upsilon_{a_+}<\upsilon_x<\upsilon_{a_-}<\upsilon_{\max}$ and
	\begin{equation*}
        \begin{aligned}
      		\mathcal{Y}_{u_+}(\upsilon_{\min},c_1(\upsilon_{a_+}))
      		&=
      		\mathcal{Y}_{u_-}(\upsilon_{\min},c_2(\upsilon_{a_-}))\\
            \mathcal{Y}_{u_+}(\upsilon_x,c_1(\upsilon_{a_+}))
            &= 
            \mathcal{Y}_{u_-}(\upsilon_x,c_2(\upsilon_{a_-}))\\
            \mathcal{Y}_{u_+}(\upsilon_{\max},c_1(\upsilon_{a_+}))
            &=
            \mathcal{Y}_{u_-}(\upsilon_{\max},c_2(\upsilon_{a_-})).\\
        \end{aligned}
	\end{equation*}
    In other words, the solutions $\mathcal{Y}_{u_+}(\cdot,c_1(\upsilon_{a_+}))$ and $	\mathcal{Y}_{u_-}(\cdot,c_2(\upsilon_{a_-}))$ which intersect $c_1$ and $c_2$ at $\upsilon_{a_+}$ and $\upsilon_{a_-}$, respectively, intersect also each other at $\upsilon_x$, $\upsilon_{\min}$ and $\upsilon_{\max}$.
\end{proposition}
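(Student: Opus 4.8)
The plan is to realise the butterfly loop as the union of two trajectories: the ascending branch $\phi:=\mathcal{Y}_{u_+}(\cdot,c_1(\upsilon_{a_+}))$, the $f_1$-trajectory passing through the point $(\upsilon_{a_+},c_1(\upsilon_{a_+}))$ on $c_1$, and a descending branch $\psi:=\mathcal{Y}_{u_-}(\cdot,c_2(\upsilon_{a_-}))$, the $f_2$-trajectory passing through $(\upsilon_{a_-},c_2(\upsilon_{a_-}))$ on $c_2$, where $\upsilon_{a_-}>\upsilon_f$ is still to be selected; the three identities are then obtained by a sign analysis of $D:=\phi-\psi$.

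First I would pin down the shape of $\phi$. Since $\phi(\upsilon_{a_+})=c_1(\upsilon_{a_+})$, Lemma~\ref{lemma:sol_positive_invariance} gives $\phi\le c_1$ on $[\upsilon_{a_+},\infty)$, and the sign of $f_1$ (negative above $c_1$, positive below) makes $\phi$ a ``valley'': strictly decreasing on $(-\infty,\upsilon_{a_+})$, strictly increasing on $(\upsilon_{a_+},\infty)$, with minimum value $c_1(\upsilon_{a_+})$. Lemma~\ref{lemma:sol_extension_outbounded} (first part) supplies $\upsilon_{c_+}<\upsilon_{a_+}$ with $\phi(\upsilon_{c_+})=c_2(\upsilon_{c_+})$. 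I would also show that $\phi$ crosses $c_2$ once more, to the right of $\upsilon_f$: on $(\upsilon_{a_+},\infty)$ the derivative of $\phi-c_2$ equals $f_1(\upsilon,\phi(\upsilon))-\frac{\dd c_2}{\dd\upsilon}\ge f_1(\upsilon,\phi(\upsilon))>0$, so $\phi-c_2$ is strictly increasing, and it is negative at $\upsilon_f$ because $\phi(\upsilon_f)<c_1(\upsilon_f)=c_2(\upsilon_f)$; hence there is a unique $\upsilon_\dagger>\upsilon_f$ with $\phi(\upsilon_\dagger)=c_2(\upsilon_\dagger)$ and $\phi>c_2$ on $(\upsilon_\dagger,\infty)$.

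For the descending side I would work with the family $\psi_s:=\mathcal{Y}_{u_-}(\cdot,c_2(s))$, $s>\upsilon_f$. By Lemma~\ref{lemma:sol_positive_invariance} (second part) every $\psi_s$ lies below $c_2$ for $\upsilon<s$ and is a valley with minimum $c_2(s)$ at $s$, and the argument of Lemma~\ref{lemma:sol_extension_outbounded} (second part) shows the forward continuation meets $c_1$ at some $\upsilon_{b_-}(s)>s$. Uniqueness for the $f_2$-equation together with the single tangency to $c_2$ forces $\{\psi_s\}$ to be pointwise strictly decreasing in $s$, so $s\mapsto\psi_s(\upsilon_{a_+})$ is continuous and decreasing. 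Because $\upsilon_{a_+}<\upsilon_\dagger$, the valley property gives $\psi_{\upsilon_\dagger}(\upsilon_{a_+})>c_2(\upsilon_\dagger)=\phi(\upsilon_\dagger)>\phi(\upsilon_{a_+})=c_1(\upsilon_{a_+})$, so by continuity I may fix $\upsilon_{a_-}>\upsilon_\dagger$ with $\psi_{\upsilon_{a_-}}(\upsilon_{a_+})>c_1(\upsilon_{a_+})$ still true. Now set $D:=\phi-\psi_{\upsilon_{a_-}}$. Then $D(\upsilon_{c_+})=c_2(\upsilon_{c_+})-\psi_{\upsilon_{a_-}}(\upsilon_{c_+})>0$ and $D(\upsilon_{a_+})=c_1(\upsilon_{a_+})-\psi_{\upsilon_{a_-}}(\upsilon_{a_+})<0$, producing a zero $\upsilon_{\min}\in(\upsilon_{c_+},\upsilon_{a_+})$; on $(\upsilon_\dagger,\upsilon_{a_-})$ we have $\phi>c_2>\psi_{\upsilon_{a_-}}$, so $D>0$ there, which with $D(\upsilon_{a_+})<0$ gives a zero $\upsilon_x\in(\upsilon_{a_+},\upsilon_\dagger)\subset(\upsilon_{a_+},\upsilon_{a_-})$; and $D(\upsilon_{a_-})=\phi(\upsilon_{a_-})-c_2(\upsilon_{a_-})>0$ while $D(\upsilon_{b_-}(\upsilon_{a_-}))=\phi-c_1\le 0$ gives a zero $\upsilon_{\max}\in(\upsilon_{a_-},\upsilon_{b_-}(\upsilon_{a_-})]$. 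Collecting these, $\upsilon_{\min}<\upsilon_{a_+}<\upsilon_x<\upsilon_{a_-}<\upsilon_{\max}$ and $\phi=\psi_{\upsilon_{a_-}}$ at all three abscissas, which is exactly the claim.

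The part I expect to fight hardest with is the existence of the second crossing $\upsilon_\dagger$: the ascending branch is pinned strictly below $c_1$, yet it must overtake $c_2$ at a finite input value. The bound $\frac{\dd}{\dd\upsilon}(\phi-c_2)\ge f_1(\upsilon,\phi(\upsilon))>0$ only yields strict monotonicity, not a uniform positive lower bound, so I must exclude the degenerate case where $\phi$ converges to a level below $\lim_{\upsilon\to\infty}c_2(\upsilon)$; here the strict monotonicity of $f_1$ in its first argument (assumed in Lemma~\ref{lemma:sol_extension_outbounded}) is the tool — if $f_1(\upsilon,\phi(\upsilon))\to 0$ this is forced by $\phi(\upsilon)$ approaching $c_1(\upsilon)$, whose limit strictly exceeds that of $c_2$ — or one simply posits $c_1,c_2$ radially unbounded. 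The remaining ingredients (the valley/non-crossing bookkeeping, the monotonicity of $\{\psi_s\}$, and the compatibility $\upsilon_\dagger<\upsilon_{a_-}$ with the threshold where $\psi_s(\upsilon_{a_+})$ first reaches $c_1(\upsilon_{a_+})$) are routine once this picture is in place.
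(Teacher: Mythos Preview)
Your approach is essentially the same as the paper's: both construct the butterfly loop by choosing $\upsilon_{a_-}$ just to the right of the point where the ascending trajectory $\phi=\mathcal{Y}_{u_+}(\cdot,c_1(\upsilon_{a_+}))$ re-crosses $c_2$, and then locate the three intersections of $\phi$ and $\psi=\mathcal{Y}_{u_-}(\cdot,c_2(\upsilon_{a_-}))$ via intermediate-value arguments built on the ``valley'' shapes of the two trajectories. Your presentation is somewhat tidier in places --- you handle the middle intersection $\upsilon_x$ by a direct sign analysis of $D=\phi-\psi$ (negative at $\upsilon_{a_+}$, positive on $(\upsilon_\dagger,\upsilon_{a_-})$), whereas the paper splits into cases on where $\psi$ meets $c_1$ and then invokes a less explicit ``decrease $\varepsilon$'' argument. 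Be aware that your notation is swapped relative to the paper's: you use $\upsilon_{c_+}$ for the \emph{left} crossing of $\phi$ with $c_2$ and $\upsilon_\dagger$ for the right one, while the paper uses $\upsilon_{b_+}$ and $\upsilon_{c_+}$ respectively.

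You are right to single out the existence of the right crossing $\upsilon_\dagger$ as the delicate step. The paper dispatches it in one sentence (``since $\tfrac{dc_2}{d\upsilon}\leq 0$, then the solution $\mathcal{Y}_{u_+}$ must also intersect the curve $c_2$ at some $\upsilon_{c_+}>\upsilon_{a_+}$''), tacitly assuming that an increasing trajectory bounded above by $c_1$ eventually overtakes the non-increasing $c_2$. Your remark that strict monotonicity of $\phi-c_2$ alone does not force this, and that one must invoke the $\upsilon$-monotonicity of $f_1$ from Lemma~\ref{lemma:sol_extension_outbounded} (or some unboundedness of $c_1,c_2$), is a genuine refinement over the paper's treatment rather than a gap in your own argument.
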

\begin{figure}
	\centering
	\includegraphics[width=1.0\linewidth]{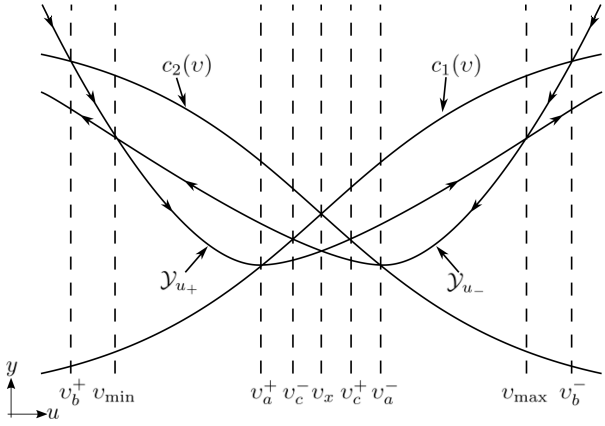}
	\caption{Construction of butterfly loop from the intersections of two solutions $\mathcal{Y}_{u_+}$ and $\mathcal{Y}_{u_-}$.\label{fig:butterfly_y+_y-}}
\end{figure}
\begin{proof}
    For a better understanding of the constructive proof of this proposition we refer the reader to Fig. \ref{fig:butterfly_y+_y-}.\\
    Consider the solution $\mathcal{Y}_{u_+}(\upsilon,c_1(\upsilon_{a_+}))$. By Lemma \ref{lemma:sol_extension_outbounded} there exists $\upsilon_{b_+}<\upsilon_{a_+}$ where this solution intersects the curve $c_2$ i.e.
    \begin{equation*}    
        \mathcal{Y}_{u_+}(\upsilon_{b_+},c_1(\upsilon_{a_+})) = c_2(\upsilon_{b_+}).
    \end{equation*}
    Additionally, by Lemma \ref{lemma:sol_positive_invariance} we have that the solution $\mathcal{Y}_{u_+}$ remains below the curve $c_1$ for every $\upsilon>\upsilon_{a_+}$ but always increasing as $\upsilon$ increases since $f_1(\upsilon,\gamma)>0$ when $\gamma<c_1(\upsilon)$. Therefore, since  $\frac{dc_2}{d\upsilon}\leq0$, then the solution $\mathcal{Y}_{u_+}$ must also  intersect the curve $c_2$ at some $\upsilon_{c_+}>\upsilon_{a_+}$ i.e.
    \begin{equation*}    
        \mathcal{Y}_{u_+}(\upsilon_{c_+},c_1(\upsilon_{a_+})) = c_2(\upsilon_{c_+}).
    \end{equation*}
    
    \noindent Let us define now $\upsilon_{a_-} = \upsilon_{c_+} + \varepsilon$ with $\varepsilon>0$ being arbitrarily small and consider the solution $\mathcal{Y}_{u_-}(\upsilon,c_2(\upsilon_{a_-}))$. 
    As in the previous case, by Lemma \ref{lemma:sol_extension_outbounded} there exists $\upsilon_{b_-}>\upsilon_{a_-}$ where this solution intersects the curve $c_1$ i.e. 
    \begin{equation*}    
        \mathcal{Y}_{u_-}(\upsilon_{b_-},c_2(\upsilon_{a_-})) = c_1(\upsilon_{b_-}).
    \end{equation*}
    We can also note that by Lemma \ref{lemma:sol_positive_invariance} the solution $\mathcal{Y}_{u_-}$ remains below the curve $c_2$ but always increasing as $\upsilon$ decreases given that $f_2(\upsilon,\gamma)>0$ for every $\upsilon<\upsilon_{a_-}$.
    Consequently, since  $\frac{dc_1}{d\upsilon}\geq0$, then the solution $\mathcal{Y}_{u_-}$ must also intersect the curve $c_1$ at some $\upsilon_{c_-}<\upsilon_{a_-}$ i.e. 
    \begin{equation*}
        \mathcal{Y}_{u_-}(\upsilon_{c_-},c_2(\upsilon_{a_-})) = c_1(\upsilon_{c_-}).
    \end{equation*}
    
    \noindent If the value $\upsilon_{c_-}$ satisfies $\upsilon_{c_-}>\upsilon_{a_+}$ it is clear that the solution $\mathcal{Y}_{u_-}$ intersects with $\mathcal{Y}_{u_+}$ at some $\upsilon_x$ such that $\upsilon_{a_+}<\upsilon_x<\upsilon_{a_-}$.
    In the opposite case that $\upsilon_{c_-}<\upsilon_{a_+}$ and the solution $\mathcal{Y}_{u_-}$ does not intersect with $\mathcal{Y}_{u_+}$ at some $\upsilon$ such that $\upsilon_{a_+}<\upsilon<\upsilon_{a_-}$, then we can decrease arbitrarily $\varepsilon$ as long as it is positive and since $\upsilon_{a_-}=\upsilon_{c_+} + \varepsilon$, then there must exists $\upsilon_{a_+}<\upsilon_x<\upsilon_{a_-}$ such that 
    \begin{equation*}
   		\mathcal{Y}_{u_+}(\upsilon_x,c_1(\upsilon_{a_+}))
   		=
   		\mathcal{Y}_{u_-}(\upsilon_x,c_2(\upsilon_{a_-})).
   	\end{equation*}

    \noindent Note now that since $\mathcal{Y}_{u_-}$ intersects with $c_1$ at $\upsilon_{b_-}$, and $\mathcal{Y}_{u_+}$ always increases but remains below $c_1$ as $\upsilon$ increases, then there must exists $\upsilon_{a_-}<\upsilon_{\max}<\upsilon_{b_-}$ such that
    \begin{equation*}
        \mathcal{Y}_{u_+}(\upsilon_{\max},c_1(\upsilon_{a_+}))
        =
        \mathcal{Y}_{u_-}(\upsilon_{\max},c_2(\upsilon_{a_-})).
    \end{equation*}
    
    \noindent Finally, by converse arguments, since $\mathcal{Y}_{u_+}$ intersects with $c_2$ at $\upsilon_{b_+}$, and $\mathcal{Y}_{u_-}$ always increases but remains below $c_2$ as $\upsilon$ decreases, then there must exists $\upsilon_{b_+}<\upsilon_{\min}<\upsilon_{a_+}$ such that
    \begin{equation*}
        \mathcal{Y}_{u_+}(\upsilon_{\min},c_1(\upsilon_{a_+}))
        =
        \mathcal{Y}_{u_-}(\upsilon_{\min},c_2(\upsilon_{a_-})).
    \end{equation*} 
\end{proof}

It should be immediately noted from Proposition \ref{prop:sequences_convergent_unique} on the accommodation property and from Proposition \ref{prop:duhem_butterfly} on the existence of an invariant butterfly loop that applying a simple periodic input $u_p\in AC(\R_+,\R)$ with only one maximum and one minimum in its periodic interval whose values are $\upsilon_{\min}$ and $\upsilon_{\max}$, then the input-output phase plot will converge to the butterfly hysteresis loop for every initial value of the output $\gamma_0\in\R$.

\subsection{First example of a Duhem butterfly operator \label{subsec:duhem_butterfly}}

As an illustrative example, we introduce now a Duhem butterfly operator which we build constructively by: i). defining arbitrary curves $c_1(\upsilon,\gamma)$ and $c_2(\upsilon,\gamma)$ satisfying conditions of Lemma \ref{lemma:sol_positive_invariance}; and ii). selecting the functions $f_1$ and $f_2$ such that these curves correspond to the zero level set (i.e. $f_1(\upsilon,c_1(\upsilon)) = f_2(\upsilon,c_2(\upsilon)) = 0$) and satisfy the hypotheses in Lemma \ref{lemma:sol_extension_outbounded} and Proposition \ref{prop:duhem_butterfly}. 
In general, any functions $f_1$ and $f_2$ satisfying hypotheses in Lemmas \ref{lemma:sol_extension_outbounded},  \ref{lemma:sol_positive_invariance} and Proposition \ref{prop:duhem_butterfly}, which can be constructed using particular kernel functions or identified using existing models in literature, will produce Duhem butterfly operators.

Let us firstly define the curves $c_1$ and $c_2$ by
\begin{align}
    c_1(\upsilon,\gamma) &:= \phantom{+} 
        a_1 + a_2 \upsilon + a_3 \upsilon^3, \label{eq:c1_butterfly_cubic}\\
    c_2(\upsilon,\gamma) &:= 
      - b_1 - b_2 \upsilon - b_3 \upsilon^3. \label{eq:c2_butterfly_cubic}
\end{align}
In order to assign these curves as the zero level sets we can define $f_1$ and $f_2$ 
as the signed vertical distance between the curve $c_1(\upsilon,\gamma)$ and 
the point $(\upsilon,\gamma)$, and respectively, between $c_1(\upsilon,\gamma)$ and the point  
$(\upsilon,\gamma)$. 
Here, we need to take care that the convergence conditions \eqref{eq:f1_strictly_monotonic_convergent} and \eqref{eq:f2_strictly_monotonic_convergent} are satisfied. Accordingly, we can define  $f_1$ and $f_2$ by
\begin{align}
    f_1(\upsilon,\gamma) &:= \phantom{+}\left( c_1(\upsilon)-\gamma \right) \nonumber\\
    &\phantom{:}= \phantom{+}\left( a_1 + a_2 \upsilon + a_3 \upsilon^3 - \gamma \right), \label{eq:f1_butterfly_cubic}\\
    f_2(\upsilon,\gamma) &:= - \left( c_2(\upsilon)-\gamma \right) \nonumber\\
    &\phantom{:}= \phantom{+}\left( b_1 + b_2 \upsilon + b_3 \upsilon^3 + \gamma \right). \label{eq:f2_butterfly_cubic}
\end{align}
Substituting the functions defined above into \eqref{eq:f1_strictly_monotonic_convergent} and \eqref{eq:f2_strictly_monotonic_convergent} we obtain that
\begin{equation*}
    -(\gamma_1-\gamma_2)^2<0 \quad\text{and}\quad (\gamma_1-\gamma_2)^2>0, 
\end{equation*}
which are trivially satisfied.

In Fig. \ref{fig:butterfly_cubic}, we present the simulation results of a Duhem butterfly operator \eqref{eq:duhem_model} defined with \eqref{eq:f1_butterfly_cubic} and \eqref{eq:f2_butterfly_cubic} when a periodic input, whose maximum and minimum are $u_{\max}=5$ and $u_{\max}=-5$, is applied.
\begin{figure}
    \centering
    \includegraphics[width=1.0\columnwidth]{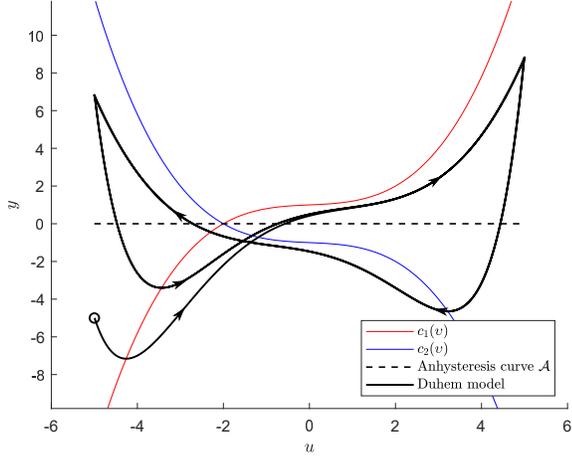}
    \caption{Butterfly hysteresis loop obtained from a model Duhem model whose gradient functions $f_1$ and $f_2$ are given by \eqref{eq:f1_butterfly_cubic} and \eqref{eq:f2_butterfly_cubic}, respectively, when a periodic input whose minimum and maximum are $\upsilon_{\min}=-5$ and $\upsilon_{\max}=5$. The initial point $(u(0),y(0))=(\upsilon_{\min},y_0)$ is marked by a circle. \label{fig:butterfly_cubic}}
\end{figure}

\subsection{Second example of Duhem butterfly operator with opposite conditions}

As remarked before, our main results in Lemma \ref{lemma:sol_positive_invariance}, Lemma \ref{lemma:sol_extension_outbounded} and Proposition \ref{prop:duhem_butterfly} hold also for the case when the signs are reversed. Correspondingly, in this subsection, we present an example of a Duhem operator that satisfy all the opposite conditions to Lemmas \ref{lemma:sol_positive_invariance}- \ref{lemma:sol_extension_outbounded} and to Proposition \ref{prop:duhem_butterfly}.   

We modify slightly the previous example in Subsection \ref{subsec:duhem_butterfly} by defining $f_1$ and $f_2$ as follows.
\begin{align}
    f_1(\upsilon,\gamma) &:= \phantom{+}\left( -c_1(\upsilon)-\gamma \right) \nonumber\\
    &\phantom{:}= \phantom{+}\left( - a_1 - a_2 \upsilon - a_3 \upsilon^3 - \gamma \right), \label{eq:f1_butterfly_cubic_negative}\\
    f_2(\upsilon,\gamma) &:= - \left( -c_2(\upsilon)-\gamma \right) \nonumber\\
    &\phantom{:}= \phantom{+}\left( -b_1 - b_2 \upsilon - b_3 \upsilon^3 + \gamma \right). \label{eq:f2_butterfly_cubic_negative}
\end{align}


By vis-\'a-vis arguments to the ones of Proposition \ref{prop:duhem_butterfly}, a Duhem operator with the above $f_1$ and $f_2$ can also produce a hysteresis loops with self-intersections. This will result in the reversion of the loop orientation. 
Fig. \ref{fig:butterfly_cubic_negative} shows a simulation result of a Duhem butterfly operator \eqref{eq:duhem_model} defined by \eqref{eq:f1_butterfly_cubic_negative} and \eqref{eq:f2_butterfly_cubic_negative} when a periodic input, whose maximum and minimum are $u_{\max}=5$ and $u_{\min}=-5$, is applied.

\begin{figure}
    \centering
    \includegraphics[width=1.0\columnwidth]{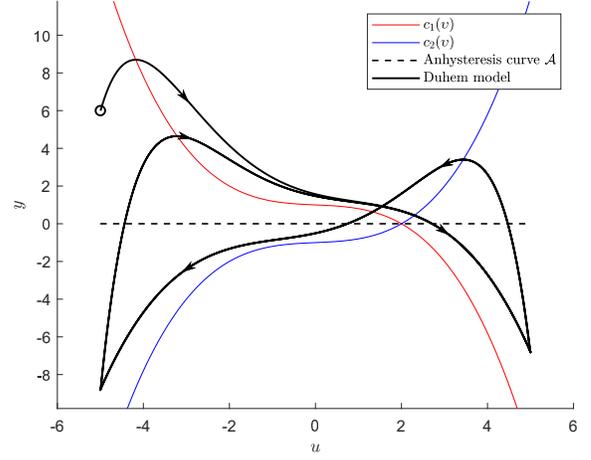}
    \caption{Butterfly hysteresis loop obtained from a Duhem model whose gradient functions $f_1$ and $f_2$ are given by \eqref{eq:f1_butterfly_cubic_negative} and \eqref{eq:f2_butterfly_cubic_negative}, respectively, when a periodic input whose minimum and maximum are $\upsilon_{\min}=-5$ and $\upsilon_{\max}=5$. The initial point $(u(0),y(0))=(\upsilon_{\min},y_0)$ is marked by a circle. \label{fig:butterfly_cubic_negative}}
\end{figure}

\subsection{A counter-example of Duhem operator with multi-loop behavior}

In this final subsection, we present 
an example of Duhem operator whose functions $f_1$ and $f_2$ are not limited to 
the conditions in Lemmas \ref{lemma:sol_extension_outbounded} and \ref{lemma:sol_positive_invariance} and Proposition \ref{prop:duhem_butterfly} but they satisfy the hypotheses in Proposition \ref{prop:sequences_convergent_unique}. In this example, when the Duhem operator is subjected to a periodic input signal, the input-output phase plot converges to a periodic orbit as expected and additionally the orbit can exhibit multi-loop hysteresis behavior. For constructing this example, we define the zero level set curves $c_1$ and $c_2$ by
\begin{align}
    c_1(\upsilon,\gamma) &:= \phantom{+} 
        10\ \sin\left(\,6\pi\ \upsilon\ + \ \frac{\pi}{8}\,\right), \label{eq:c1_multiloop}\\
    c_2(\upsilon,\gamma) &:= 
        -\phantom{0}8\ \sin\left(\,6\pi\ \upsilon\ - \  \frac{\pi}{8}\,\right), \label{eq:c2_multiloop}
\end{align}
and as presented in Subsection \ref{subsec:duhem_butterfly}, the functions $f_1$ and $f_2$ are defined as the signed vertical distance between these curves (i.e., $c_1(\upsilon,\gamma)$ and $c_2(\upsilon,\gamma)$) and the point $(\upsilon,\gamma)$, respectively. Explicitly, they are given by 
\begin{align}
    f_1(\upsilon,\gamma) &:= \left( c_1(\upsilon)-\gamma \right) \nonumber\\
    &\phantom{:}= 10\ \sin\left(\,6\pi\ \upsilon\ + \ \frac{\pi}{8}\,\right) - \gamma, \label{eq:f1_multiloop}\\
    f_2(\upsilon,\gamma) &:= - \left( c_2(\upsilon)-\gamma \right) \nonumber\\
    &\phantom{:}= \phantom{0}8\ \sin\left(\,6\pi\ \upsilon\ - \  \frac{\pi}{8}\,\right) - \gamma. \label{eq:f2_multiloop}
\end{align}
The simulation results of such Duhem operator \eqref{eq:duhem_model} with $f_1$ as in  \eqref{eq:f1_multiloop} and $f_2$ as in  \eqref{eq:f2_multiloop} are shown in Fig. \ref{fig:multiloop} where multi-loop hysteresis behavior is exhibited.

\begin{figure}
    \centering
    \includegraphics[width=1.0\columnwidth]{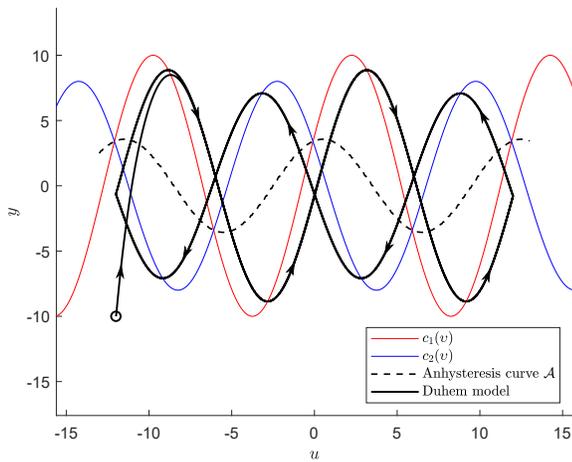}
    \caption{Multi-loop hysteresis loop obtained from a model Duhem model whose gradient functions $f_1$ and $f_2$ are given by \eqref{eq:f1_butterfly_cubic} and \eqref{eq:f2_butterfly_cubic}, respectively, when a periodic input whose minimum and maximum are $\upsilon_{\min}=-12$ and $\upsilon_{\max}=12$. The initial point $(u(0),y(0))=(\upsilon_{\min},y_0)$ is marked by a circle. \label{fig:multiloop}}
\end{figure}


\section{CONCLUSIONS}\label{sec:conclusions}
In this paper we have studied and presented sufficient conditions for a class of Duhem hysteresis operators that admit butterfly loops. Firstly, we studied general conditions on the functions $f_1$ and $f_2$ so that the Duhem operator has the accommodation property. Particularly, we do not impose positive definiteness or particular form on these functions. 
Based on the sufficient conditions for the accommodation property, we presented sufficient conditions on $f_1$ and $f_2$ 
such that the corresponding Duhem hysteresis operator is capable of exhibiting butterfly hysteresis loops. Numerical simulations show also the possibility of having multi-loop behavior when these conditions are not satisfied. The work presented in this paper can be the basis for the development of systems identification methods to model butterfly or multi-loop hysteresis phenomena in many electro-mechanical applications based on the use of integro-differential Duhem models.  

\addtolength{\textheight}{-3cm}   




\bibliographystyle{IEEEtran}
\bibliography{library}

\end{document}